\newtheorem*{theorem-non}{Theorem}  
\newcommand{\R}{\mathbb{R}}
\newtheorem{theorem}{Theorem}
\newtheorem{proposition}[theorem]{Proposition}
\newtheorem{lemma}[theorem]{Lemma}
\newtheorem{definition}[theorem]{Definition}
\newtheorem{remark}{Remark}
\DeclareMathOperator{\F}{Tr}
\DeclareMathOperator{\diag}{diag}
\DeclareMathOperator{\loss}{Loss}
\DeclareMathOperator{\Loss}{loss}
\newcommand{\vecsigma}{\pmb{\sigma}}
\newcommand{\bfH}{{\bf H}}
\newcommand{\bfM}{{\bf M}}
\newcommand{\cond}{\mathrm{cond}}
\begin{document}
\title{Unique reconstruction for discretized inverse problems: a random sketching approach via subsampling}

\author{Ruhui Jin \thanks{Corresponding author, \textit{rjin@math.wisc.edu}} \thanks{Department of Mathematics, University of Wisconsin-Madison} , Qin Li$^\dagger$ , Anjali Nair\thanks{Department of Statistics and CCAM, University of Chicago} , and Samuel Stechmann$^\dagger$\thanks{Department of Atmospheric and Oceanic Sciences, University of Wisconsin-Madison}}
\date{ }

\maketitle
\begin{abstract}
Theoretical inverse problems are often studied in an ideal infinite-dimensional setting. The well-posedness theory provides a unique reconstruction of the parameter function, when an infinite amount of data is given. Through the lens of PDE-constrained optimization, this means one attains the zero-loss property of the mismatch function in this setting. This is no longer true in computations when we are limited to finite amount of measurements due to experimental or economical reasons. Consequently, one must compromise the goal, from inferring a function, to a discrete approximation. 

What is the reconstruction power of a fixed number of data observations? How many parameters can one reconstruct? Here we describe a probabilistic approach,
and spell out the interplay of the observation size $(r)$ and the number of parameters to be uniquely identified $(m)$. The technical pillar here is the random sketching strategy, in which the matrix concentration inequality and sampling theory are largely employed. By analyzing a randomly subsampled Hessian matrix, we attain a well-conditioned reconstruction problem with high probability. Our main theory is validated in numerical experiments, using an elliptic inverse problem as an example.

\end{abstract}

{\bf Keywords:}
Inverse problem, PDE-constrained optimization, well-posedness, 
condition number of Hessian matrix, matrix sketching, random sampling.

\section{Introduction}\label{sec:intro}
In many PDE-based inverse problems, measurements of a PDE model are used to infer unknown parameters (often a function) of a PDE. A very common numerical strategy is to use the framework of PDE-constrained optimization.

Under the constraint that the PDE model is honored, we are to minimize a loss function, defined as the mismatch between PDE-simulated data and the measured data:
\begin{equation}
\label{eqn:opt_cont}
\begin{array}{l}
\displaystyle \min_{\sigma\in L^2(\Omega)}\loss[\sigma]= \frac{1}{2} \, \left\|\mathcal{M}[\sigma] -d\right\|^2_{L^2(\mathcal{D})}\,,\quad\text{where}\quad d=\mathcal{M}[\sigma_*]\in L^2(\mathcal{D}),\\ \displaystyle \text{where}~~\mathcal{M}[\sigma] =  \omega[u_\sigma]=\omega \circ \mathcal{L}_\sigma^{-1} [S],
\end{array}
\end{equation}
where $\sigma(x)\in L^2(\Omega)$ is the parameter function to be inferred with $x$ supported in $\Omega$ (typically assumed to be compactly supported and with $C^1$ boundary). The mismatch is quantified by the functional $\loss: L^2(\Omega) \to \mathbb{R}^+$. The parameter-to-observation map is $\mathcal{M}: L^2(\Omega) \to L^2(\mathcal{D})$, where the measured location is denoted by the set $\mathcal{D}$ and is not necessarily the same as $\Omega$. This map $\mathcal{M}$ is read as follows. The PDE solution is $u_\sigma(x)$, as a function of $x$, for PDE operator $\mathcal{L}_\sigma$ that is parameterized by $\sigma(x)$ using the source term $S(x)$, and $\omega[u_\sigma]$ takes the measurement of the solution $u_\sigma$, and returns another function. Note that only a single source $S(x)$ is denoted in (\ref{eqn:opt_cont}) for notational simplicity, but the formulation can easily incorporate situations where multiple sources present. We are to compare this measured PDE-simulated solution with the true measurement data $d(x)$ in $L^2(\mathcal{D})$ norm, assuming the data has no noise pollution, i.e. $d = \mathcal{M}[\sigma_*]$. Since both the to-be-reconstructed parameter $\sigma: \Omega \to \R$, and the available data $d: \mathcal{D} \to \R$ are functions, the problem is posed in the infinite-to-infinite dimensional setting.

In computations, this formulation is adjusted to fit a discrete setting. In practice, let the basis of interest be a subset of a complete basis:
\[
\text{span} \left\{ \phi_1, \phi_2, \dots, \phi_N\right\}~~ \subseteq ~~ \text{span} \left\{ \phi_1, \phi_2, \dots\right\} = L^2(\Omega)\,.
\]
Then the to-be-reconstructed function $\sigma \in L^2(\Omega)$ is approximated by $\vecsigma = [\sigma_1, \dots, \sigma_N] \in \R^N,$ with $\sigma \approx \sum_{i=1}^N \sigma_i \phi_i $. The number of available data points is also finite. Letting $\{\psi_j\}_{j=1}^r$ be the collection of test functions used to detect the PDE solution, the discrete finite-to-finite dimensional problem is reformulated as:
\begin{equation}
\label{eqn:opt_dis}
\min_{\vecsigma\in \mathbb{R}^N}\Loss(\vecsigma)=\sum_{j=1}^r\underbrace{\frac{1}{2}\left( \langle\mathcal{M}(\vecsigma), \psi_j\rangle -{\bf d}_j\right)^2}_{\Loss_j} \,,\quad\text{where}\quad {\bf d}_j= \langle\mathcal{M}(\vecsigma_*), \psi_j\rangle_{\mathcal{D}}\in\mathbb{R}.
\end{equation}
Here we abuse the notation and set $\mathcal{M}(\vecsigma)=\mathcal{M}[\sum_{i=1}^N\sigma_i\phi_i]$. The collected data points $\{{\bf d}_j\}$ are assumed to be noise-free, and thus have the form of ${\bf d}_j=\langle\mathcal{M}(\vecsigma_*), \psi_j\rangle$ where $\vecsigma_*$ is the ground-truth media. In this case, the ground-truth $\vecsigma^*$ is a global optimum to \eqref{eqn:opt_dis}, since it achieves zero-loss: $\text{loss}(\vecsigma_*) = 0$ . The notation of ``$\Loss: \R^N \to \R^+$'' is no longer a functional, but a real-valued function.

\subsection{Motivation and objective of this work}
On the surface, the continuous~\eqref{eqn:opt_cont} and the discrete~\eqref{eqn:opt_dis} problems are rather similar. However, their theoretical behaviors are actually drastically different. In particular, close to the ground-truth $\sigma_*$, by deploying the unique reconstruction theory proved in the analytical setting, we can deduce that the Hessian of the cost functional is not degenerate, ensuring that $\sigma_*$ is the unique optimum point in the neighborhood. In the discrete case, however, eliminating this degeneracy requires further assumptions.

To be more specific, first consider the continuous setting. Denoting $\mathcal{H}\loss[\sigma_*]$ as the Hessian of the functional $\loss$ at $\sigma_*$, then formally for every pair of $(\widetilde{\sigma}_1,\widetilde{\sigma}_2)$, the Hessian is a bilinear map defined as
\begin{equation*}
\mathcal{H}\loss[\sigma_*](\widetilde{\sigma}_1,\widetilde{\sigma}_2) = \lim_{\delta\to0}\frac{\loss[\sigma_*+\delta\widetilde{\sigma}_1+\delta\widetilde{\sigma}_2]+\loss[\sigma_*]-\loss[\sigma_*+\delta\widetilde{\sigma}_1]-\loss[\sigma_*+\delta\widetilde{\sigma}_2]}{\delta^2}\,,
\end{equation*}
and thus:
\[
\mathcal{H}\loss[\sigma_*]:L^2(\Omega)\times L^2(\Omega)\to\mathbb{R}\,.
\]
With mild assumptions (\ref{hess positivity}), one deduces:
\begin{equation}
    \label{eqn:hessian_infinite}
\mathcal{H} \loss[\sigma_*]\succ 0\,,\quad\text{or equivalently, for any }\widetilde{\sigma}\neq 0\in L^2(\Omega)\,, \quad \mathcal{H} \loss[\sigma_*](\widetilde{\sigma},\widetilde{\sigma})> 0\,.
\end{equation}
As a consequence, $\sigma_*$ is a local optimum point.

This strict positivity cannot be straightforwardly extended to fit the discrete setting \eqref{eqn:opt_dis}. Indeed, the loss function in~\eqref{eqn:opt_dis} also induces a Hessian. According to the definition of $\Loss=\sum_{j=1}^r \Loss_j$, with the parameter $\vecsigma\in\mathbb{R}^N$, the Hessian is
\begin{equation}
\label{eqn:hessian_N}
\mathcal{H} \Loss (\vecsigma) = \sum_{j=1}^r\mathcal{H} \Loss_j(\vecsigma)\in\mathbb{R}^{N\times N}\,.
\end{equation}
Naturally the Hessian is of size $\mathbb{R}^{N\times N}$ and has a rank at most $r$. The positivity is not guaranteed. Especially in the case of $N>r$, when the number of parameters to be recovered is more than the number of data points given, the Hessian is guaranteed to be rank-deficient. The statement on the uniqueness of the local minimum of $\vecsigma_*$ is no longer valid.

This vast contrast between positivity of the Hessian in the continuous setting~\eqref{eqn:hessian_infinite} and that in the discrete setting~\eqref{eqn:hessian_N} prompts the key question of the current work:

\begin{center}
    \emph{Given fixed data, under what circumstance is the positivity of the Hessian~\eqref{eqn:hessian_N} preserved?}
\end{center}

We should note that our framework takes the perspective of PDE-constrained optimization, and examines the Hessian's local behavior around the global minimum. Preserving positivity of the Hessian is only one of the first quantities to be examined. As having a well-conditioned Hessian makes the PDE-based optimization~\eqref{eqn:opt_dis} easier to solve. A more substantial research effort is required to produce an accurate approximation of the true minimizer, and to complete the full picture of a well-posedness comparison of the inverse problem posed in continuous and discrete settings.

\subsection{Key strategy and main results}
\label{subsec: strategy results}
Throughout the paper, we assume $N>r$ and thus it is impossible to reconstruct the full parameter $\vecsigma \in \R^N$. However, it is possible to recover a sector of it. We are to understand how big this sector can be.

Suppose we reduce our problem from recovering $\vecsigma\in\mathbb{R}^N$ to a subset of $\vecsigma|_{\Lambda} \in \R^m$ with the cardinality of $|\Lambda| = m$, with $\Lambda$ standing for a mask of the smaller set of parameters under reconstruction. If $m\leq r$, heuristically, it is tempting to say that $\vecsigma |_{\Lambda}$ can be recovered. However, the data can be ``degenerate." The relation between the number of available data points ($r$), and the number of parameters needing to be reconstructed ($m$) is intriguing. In this paper, we present a probabilistic approach to partially answer this question. We examine the possibility of ``sketching" the $N \times N$ matrix \eqref{eqn:hessian_N} and projecting it down to a smaller matrix, and we look for conditions under which the projected matrix attains positivity, making the problem locally strictly convex when confined in $\Lambda$.

The main technical tool we deploy comes from randomized matrix solvers and matrix sketching. In the current context, it is reformulated as subsampling the Hessian matrix: 
\begin{equation}
\label{eqn:hessian_reduce}
\mathcal{H} \Loss(\vecsigma)\in\mathbb{R}^{N\times N} \to \mathcal{H} \Loss(\vecsigma\vert_{\Lambda}) =\mathbf{S}(\mathcal{H}\Loss (\vecsigma)) \mathbf{S}^\top \in \R^{m \times m}.
\end{equation}
Here the ``subsampling'' matrix is $\mathbf{S} \in \R^{m \times N}$, which selects rows in $\mathcal{H}\Loss$ (see explicit definition in~\eqref{eqn: sub phi}).

We are to investigate the conditions on $\mathbf{S}$ so that the sketched submatrix~\eqref{eqn:hessian_reduce} has a high probability of being positive definite. To ensure the sketched Hessian is of full rank, and noting that the base Hessian \eqref{eqn:hessian_N} is rank $r$, it is necessary to assume $m\leq r$. Then, by labeling the eigenvalues w.r.t. the Euclidean norm of \eqref{eqn:hessian_reduce} in descending order as
\[
\lambda_1\geq\lambda_2\geq\cdots\geq\lambda_{m}\,,
\]
the condition number of the matrix can be written in conventional notation as
\begin{equation}
\label{eqn: cond}
\cond\left(\mathbf{S}(\mathcal{H}\Loss (\vecsigma)) \mathbf{S}^\top\right)=\frac{\lambda_1}{\lambda_m}\,.
\end{equation}
An informal version of \cref{thm: well-conditioned hess} is as follows: 
\begin{theorem-non}[exposition of main \cref{thm: well-conditioned hess}]
Consider the Hessian matrix $\mathcal{H}\Loss(\vecsigma) \in \R^{N \times N}$ \eqref{eqn:hessian_N} of rank $r$. Fix the sampling dimension $m \in \mathbb{N}^+$, where $m \leq o(r) \leq N$.
The subsampled Hessian ${\bf S}\,\mathcal{H}\Loss(\vecsigma)\, {\bf S}^\top \in \R^{m \times m}$ \eqref{eqn:hessian_reduce} is well-conditioned with high probability. Particularly, 
\begin{equation}
\label{eqn: success}
\mathbb{P} \left( \emph{\text{cond}} \left(\mathbf{S}(\mathcal{H}\Loss (\vecsigma)) \mathbf{S}^\top \right) \leq  o(r)\right) \leq 1 -\frac{1}{r}.
\end{equation}
The constants in the $o$ notations are to be explicitly spelled out in the main Theorem~\ref{thm: well-conditioned hess}.
\end{theorem-non}
Broadly speaking, this theorem demonstrates that the condition number of the Hessian submatrix is bounded above with high probability. We give a few noteworthy interpretations of this theorem: 
\begin{enumerate}
\item This theorem states that, to achieve the well-conditioning of the subsampled Hessian with high probability, the sampling dimension satisfies $m=o(r)$. The dependence is linear. In most situations, $r$ represents the number of measurements, so the presence of more measurements gives higher probability of securing a well-conditioned sketched problem.
\item The specific criterion of ``success," according to the LHS of \eqref{eqn: success}, 
is defined as the condition number of the sketched Hessian being bounded by a certain threshold. This threshold linearly grows with respect to the full rank $r.$
    \item The RHS of the concentration inequality \eqref{eqn: success} provides an explicit success probability. The greater the value of the full rank $r$ is, the greater the probability to achieve a well-conditioned subsampled Hessian \eqref{eqn:hessian_reduce}.
    
\end{enumerate}

\subsection{Literature review}

The problem is closely related to PDE-constrained optimization, and its technical support mostly comes from random matrix theory. To some extent, the problem can also be cast in a setting of experimental design, sometimes referred to as data selection. We now review relevant literature from all three perspectives. These three areas are heavily researched independently. We only provide references that are closest to our immediate study.

{\bf\emph{PDE-constrained optimization.}}
The problem that we study is formulated and originated from PDE-constrained optimization \cite{HPUU08}, a formulation extensively used for inverse problems \cite{OHS13,A99} and optimal control \cite{LEGHRSUU12}. There are many challenges. Most problems suffer from notorious non-convexity, with landscapes sometimes very rugged. Often, regularization is introduced to convexify the loss function~\cite{EHN96,IJ14,Heinz_Engl_2000,Engl_1989}, and in other approaches, specifics of the problems are built in algorithm design, as is done in full-wave inversion using multi-frequency data~\cite{EAGB08,CMOBV15,zhang_wideband} that mitigate cycle skipping~\cite{EY22}.

{\bf \emph{Randomized sketching and subsampling techniques.}}
Random matrix sketching provides the technical foundation for our current work. 
Randomized numerical linear algebra has attracted a lot of attention in the past decade, mainly in the context of data analysis and machine learning tasks. With connections across the fields of numerical linear algebra \cite{HMT11,M11,MT20}, applied probability \cite{V18}, and theoretical computer science \cite{FKV98,W14}, randomized linear algebra techniques and theory have largely been developed and have ripened in recent years. Broadly speaking, the subject is to look for a low-dimensional representation of high-dimensional data. This is supported by the bulk of theoretical foundations built upon the celebrated Johnson-Lindenstrauss lemma \cite{JL84}. The tools have also been investigated in the inverse setting, with the aim being to compute the standard linear regression $\mathbf{A}\mathbf{x}=\mathbf{b}$ with subsampled data from $\bf A, b$. This can be understood as a reduced inverse problem since one is essentially looking for an approximation to $\mathbf{A}^{-1}$ with the reduced rows.

In the context of the PDE-constrained optimizations, randomized solvers also have already facilitated the downstream iterative steps, for instance in waveform inversions \cite{S08,BHT11,CVG17}. Randomized matrix solvers thus have been successfully applied in preconditioning the Hessian; see works in \cite{CD12,DLBCCS12,FTU23}. Our project targets at a different focus from the randomized Hessian preconditioner. To be specific, we utilize the sketching probabilistic analysis to study the conditioning of the reduced Hessian matrix \eqref{eqn:hessian_reduce}. The specific machinery we lean on is the matrix concentration inequalities~\cite{RV07,T08-1,tropp2012user} for the spectral properties of the row and column sampled matrices. 

{\bf \emph{Experimental design and data selection.}}
Experiment design \cite{P06,TanLiZepeda_bridging} is a popular topic over the last few decades, as experimentalists ask for the most informative locations for placing detectors/sensors~\cite{brunton_sensor,Barthorpe_sensor}. The optimal design problem emerges from many different areas such as in biology, medical imaging \cite{bsewb23}, geophysics \cite{A21}, and many others. Our problem shares similar goals with experimental design, in the sense that both look to exploit information in the available data through the Hessian behavior~\cite{huan2024optimalexperimentaldesignformulations,jin2024optimaldesignlinearmodels}. In contrast to experimental design that looks for the most informative data for reconstructing a fixed set of parameters, our approach assumes that the data is already given, and we seek parameters that are reconstructable using the given data.

\subsection*{Organization of the paper}
The remainder of the paper is organized as follows. In Section~\ref{sec: main results} we set notations and present our main theoretical results. In particular, we quantify the probability of success, defined as the conditioning of the confined Hessian matrix being upper bounded, and we also evaluate the dependence of such success on various parameters (notably $m$ and $r$). In Section~\ref{sec:technical} we garner major technical components of the proof for the theorem. Finally in Section~\ref{sec:numerics} we present numerical examples over synthetic data and PDE simulation data from an elliptic inverse problem. The numerical results showcase the agreement with the theoretical guarantees. 
\section{Main results}
\label{sec: main results}
We present the formulation of the problem and unify notation in Section~\ref{sec: formulation}. The formulation leads to a problem of analyzing the conditioning of a specifically defined matrix. The main results are presented in subsection~\ref{sec:conditioning}. We discuss the proof ingredients and the general strategy in subsection~\ref{subsec: proof ingredients}. 

\subsection{Problem formulation}
\label{sec: formulation}
We formulate the problem and unify notation in this section. Recalling the definition in~\eqref{eqn:opt_dis}, with straightforward derivation, we compute each entry of the Hessian of the loss function. At the ground-truth $\vecsigma=\vecsigma_*$, for $i_1, i_2 = 1, \dots, N$, we have
\begin{equation}
\label{eqn: hess derivation}
\begin{array}{ll}
&\displaystyle (\mathcal{H}\,\text{loss})_{i_1, i_2} (\pmb{\sigma}_*) \\
\displaystyle&= \sum_{j=1}^r \langle \partial_{i_1} \mathcal{M}(\pmb{\sigma}_*),  \psi_j \rangle\,\langle \partial_{i_2} \mathcal{M}(\pmb{\sigma}_*),  \psi_j \rangle + (\mathcal{H} \mathcal{M})_{i_1, i_2}(\pmb{\sigma}_*)  (\langle\mathcal{M}(\pmb{\sigma}_*), \psi_j\rangle-{\bf d}_j),\\
\displaystyle&= \sum_{j=1}^r \langle \partial_{i_1} \mathcal{M}(\pmb{\sigma}_*),  \psi_j \rangle\,\langle \partial_{i_2} \mathcal{M}(\pmb{\sigma}_*),  \psi_j \rangle,\end{array}
\end{equation}
where the second term drops out because ${\bf d}_j=\langle\mathcal{M}(\vecsigma_*), \psi_j\rangle$, recovering the form of the Gauss-Newton Hessian. The first-order derivative has the standard definition $\partial_i \mathcal{M}(\pmb{\sigma}) = \lim_{\epsilon \to 0}\frac{\mathcal{M}[\sigma+\epsilon \phi_i] - \mathcal{M}[\sigma] }{\epsilon}$ and when $\{\phi_i\}$ are orthonormal, $ \partial_i \mathcal{M}(\pmb{\sigma})= \langle\frac{\delta\mathcal{M}}{\delta\sigma}\,,\phi_i\rangle_\Omega$ where $\frac{\delta\mathcal{M}}{\delta\sigma}$ is the Fr\'echet derivative.

This symmetric structure of the Hessian~\eqref{eqn: hess derivation} strongly suggests the use of the short-hand notation of
\begin{equation}
\label{eqn: hess N}
\begin{array}{l}
{\bf H}^N  \displaystyle = \left[\begin{array}{ccc}
{\bf H}_{1,1} &\dots &{\bf H}_{1,N}\\
 \vdots & \ddots& \vdots \\
{\bf H}_{N,1} & \dots  &{\bf H}_{N,N}
\end{array}\right]  = \displaystyle \pmb{\Phi}^N \pmb{\Phi}^{N\top}
\in \R^{N \times N},
\\\text{with}\quad \pmb{\Phi}^N \in \R^{N \times r}~~\text{and}~~ \pmb{\Phi}_{i,j} = \langle \partial_i \mathcal{M}(\pmb{\sigma}_*),  \psi_j \rangle.
\end{array}
\end{equation}

We are interested in the regime where $N\gg r$, so many rows in $\pmb{\Phi}^N \in \R^{N \times r}$ are linearly dependent, and $\bf H$ is rank deficient and non-invertible.

We look for a subsampled Hessian matrix that can be inverted, and has moderate condition number. Since the Hessian matrix is always positive semidefinite, we are to look for the subsampling that guarantees \emph{strict} positive definiteness. Mathematically, randomly selecting $m\leq r$ rows from $\pmb{\Phi}^N$ amounts to multiplying the sampling matrix ${\bf S}^N \in \R^{m \times N}$ on $\pmb\Phi^N$:
\begin{equation}
\label{eqn: sub phi}
{\bf S}^N \pmb{\Phi}^N = 
\left[\begin{array}{c}
- \pmb{\Phi}^N_{\omega_1,:} - \\
\vdots \\
- \pmb{\Phi}^N_{\omega_m,:} -
\end{array}\right] \in \R^{m \times r},\quad\text{with}\quad{\bf S}^N_{i,:} = e_{\omega_i}\,,
\end{equation}
where $e_i$ is the unit vector placing $1$ on $i$-th entry, and the random variables $\{\omega_i\}_{i=1}^m$ are generated uniformly from the integer set $\{1, \dots, N \}$.

Accordingly, the sampled Hessian submatrix becomes:
\begin{equation}
\label{eqn: sub hess}
{\bf S}^N {\bf H}^N {\bf S}^{N\top} = {\bf S}^N \pmb{\Phi}^N \pmb{\Phi}^{N\top} {\bf S}^{N\top}:={\bf H}^{N}_s \in \R^{m \times m}.
\end{equation}
The problem now translates to finding conditions for this subsampled matrix ${\bf H}^{N}_s $ to be well-conditioned.

\subsection{Conditioning of the Hessian}
\label{sec:conditioning}
We are to estimate the upper bound of the condition number of ${\bf H}^N_s$ when $m$ is small. To start, we first define a few key quantities for properties of ${\bf H}^N$. 
\begin{definition}
\label{def: parameters}
\begin{enumerate}
\item{Diagonal variation parameters:}
We denote by $\ell$ and $L$ the diagonal variation parameters. In particular, supposing all the diagonals are strictly positive, we define
\begin{equation}
\label{eqn: diag bound}
\ell: = \frac{N}{\F\left({\bf H}^N\right)} \min_{i =1,\dots, N} {\bf H}_{i,i}^N, \quad \quad 
L:=\frac{N}{\F\left({\bf H}^N\right)} \max_{i =1,\dots, N} {\bf H}_{i,i}^N\,.
\end{equation}
\item{The coherence parameter:} We denote by $ \mu>0$ the coherence:
\begin{equation}
\label{eqn: coherence}
\mu :=\frac{N}{\left\|{\bf H}^N \right\|_F}\max_{i_1 \neq i_2 = 1, \dots, N} \left\vert{\bf H}^N_{i_1, i_2} \right\vert\,.
\end{equation}
\end{enumerate}
\end{definition}
We note that all these quantities are inherent attributes of ${\bf H}^N$ and are ``relative'' in nature. To be specific, the quantities are preserved when $\bfH^N$ is scaled by a constant. For example, $L(\bfH^N)=L(c\bfH^N)$ for any $c >0$. Clearly, $\frac{\F\left({\bf H}^N\right)}{N}$ is the averaged value along the diagonal for $\bfH^N$, and $\frac{\left\|{\bf H}^N \right\|_F}{N}$ is the root-mean-square of the entries in $\bfH^N$.

We next provide the main probabilistic statement of the Hessian submatrix ${\bf H}^N_s$ being well-conditioned. This is the rigorous version of the informal statement shown in \cref{subsec: strategy results}.
\begin{theorem}
\label{thm: well-conditioned hess}
Consider the discrete Hessian matrix ${\bf H}^N \in \R^{N \times N}$ of rank $r$ from \eqref{eqn: hess N}. Fix the sampling dimension $m \in \mathbb{N}^+,$ where $m\leq r \leq N$, and moreover let $m$ satisfy 
\begin{equation}
\label{eqn: m est}
m \leq \displaystyle \min\left\{\frac{\ell}{146e^{\frac{1}{4}}}\,\left(\frac{\F\left({\bf H}^N\right)}{\left\|{\bf H}^N\right\|_2}-1\right)\,,\quad \frac{\ell^2}{149e^{\frac{1}{2}}\mu^2 \log r} \, \frac{\F\left({\bf H}^N\right)^2}{\left\|{\bf H}^N \right\|^2_F}\right\}\,.
\end{equation}
The subsampled Hessian ${\bf H}^N_s \in \R^{m \times m}$ \eqref{eqn: sub phi}-\eqref{eqn: sub hess}, generated from uniform sampling, is well-conditioned with high probability: 
\begin{equation}
\label{eqn: main probability}
\mathbb{P}\left( \cond(\bfH^N_s) \leq \frac{L+\tau(m)}{\ell - \tau(m)} \right) \geq 1-\frac{1}{r}\,,
\end{equation}
where the condition number is defined in \eqref{eqn: cond}, and the distortion quantity $\tau(m)$ is
\begin{equation}
\label{eqn: distortion}
\tau(m) = e^{\frac{1}{4}} \, \left(2m\, \frac{\left\|{\bf H}^N\right\|_2}{\F\left({\bf H}^N\right)} + 12\mu\, \sqrt{m\, \log r}\, \frac{\left\|{\bf H}^N\right\|_F}{\F\left({\bf H}^N\right)}\right).
\end{equation}
\end{theorem}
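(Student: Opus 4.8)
The plan is to split the sampled submatrix into its diagonal and off-diagonal parts, $\bfH^N_s = \mathbf{D} + \mathbf{E}$, where $\mathbf{D}=\diag(\bfH^N_{\omega_1,\omega_1},\dots,\bfH^N_{\omega_m,\omega_m})$ collects the sampled diagonal entries and $\mathbf{E}$ is the remaining (random) off-diagonal block, and then to treat $\mathbf{E}$ as a perturbation of the well-understood diagonal $\mathbf{D}$. By Weyl's inequality, $\lambda_{\max}(\bfH^N_s)\le\lambda_{\max}(\mathbf{D})+\|\mathbf{E}\|_2$ and $\lambda_{\min}(\bfH^N_s)\ge\lambda_{\min}(\mathbf{D})-\|\mathbf{E}\|_2$. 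Hence, on the event that $\|\mathbf{E}\|_2\le \tfrac{\F(\bfH^N)}{N}\tau(m)$, the normalizing factor $\F(\bfH^N)/N$ cancels in the ratio and the condition number is squeezed into $(L+\tau(m))/(\ell-\tau(m))$. The theorem then reduces to (i) a deterministic two-sided bound on $\mathbf{D}$, and (ii) a high-probability spectral-norm bound on the random off-diagonal block $\mathbf{E}$.

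The diagonal piece is immediate and deterministic. Because the sampled indices are distinct, each diagonal entry $\mathbf{D}_{kk}=\bfH^N_{\omega_k,\omega_k}$ lies in $[\ell\,\F(\bfH^N)/N,\ L\,\F(\bfH^N)/N]$ directly from the definition of $\ell$ and $L$ in \eqref{eqn: diag bound}, irrespective of which rows are drawn; hence $\lambda_{\min}(\mathbf{D})\ge \ell\,\F(\bfH^N)/N$ and $\lambda_{\max}(\mathbf{D})\le L\,\F(\bfH^N)/N$. All the work therefore concentrates on showing $\tfrac{N}{\F(\bfH^N)}\|\mathbf{E}\|_2\le\tau(m)$ with probability at least $1-1/r$.

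The main obstacle is item (ii). I would first observe that $\mathbf{E}=\mathbf{S}^N\,\bfH^N_{\mathrm{off}}\,\mathbf{S}^{N\top}$ is a random principal submatrix of the zero-diagonal matrix $\bfH^N_{\mathrm{off}}:=\bfH^N-\diag(\bfH^N)$, so the estimate is exactly a random-coordinate-restriction norm bound, which is where the matrix concentration machinery \cite{RV07,T08-1,tropp2012user} enters. The plan is to establish an expected-norm bound of the shape
\[
\mathbb{E}\|\mathbf{E}\|_2 \lesssim \frac{m}{N}\,\|\bfH^N_{\mathrm{off}}\|_2 + \sqrt{\tfrac{m}{N}\,\log r}\;\max_i\|\mathrm{row}_i(\bfH^N_{\mathrm{off}})\|_2 ,
\]
and then to convert it into a tail bound. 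The two summands of $\tau(m)$ match the two terms here: the mean term uses the positive-semidefinite bound $\|\bfH^N_{\mathrm{off}}\|_2\le \|\bfH^N\|_2+\max_i\bfH^N_{ii}\le 2\|\bfH^N\|_2$ (the diagonal of a PSD matrix being dominated by its largest eigenvalue), producing $2m\|\bfH^N\|_2/\F(\bfH^N)$; the fluctuation term uses the coherence bound $|\bfH^N_{i_1,i_2}|\le \mu\|\bfH^N\|_F/N$ from \eqref{eqn: coherence}, so a single row of $\bfH^N_{\mathrm{off}}$ has Euclidean norm at most $\mu\|\bfH^N\|_F/\sqrt N$, producing $12\mu\sqrt{m\log r}\,\|\bfH^N\|_F/\F(\bfH^N)$. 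The genuine difficulty is that the entries of $\mathbf{E}$ form a degree-two chaos in the sampling variables (each entry depends on two sampled rows), so no direct sum-of-independent-matrices representation is available; I expect to need a decoupling step—recasting the uniform subsampling through independent selectors and replacing one copy by an independent one—followed by a matrix Khintchine / Rudelson inequality, with careful bookkeeping of the explicit constants. The appearance of $\log r$ (rather than $\log m$ or $\log N$) and of the success probability $1-1/r$ emerge together in the expectation-to-tail conversion: the dimension factor of the $m\times m$ matrix and the target failure probability $1/r$ combine, via $m\le r$, into a single $\log r$, while the prefactor $e^{1/4}$ is the cost of that step.

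Finally I would assemble the estimates. On the good event, the Weyl bounds give $\cond(\bfH^N_s)\le (L+\tau(m))/(\ell-\tau(m))$, and it remains only to verify that the hypothesis \eqref{eqn: m est} forces $\ell-\tau(m)>0$, which simultaneously certifies that $\bfH^N_s$ is strictly positive definite and makes the ratio meaningful. This is a direct computation: each of the two admissible ranges for $m$ caps one of the two summands of $\tau(m)$, and the constants $146\,e^{1/4}$ and $149\,e^{1/2}\mu^2$ (together with the $-1$ shift, a first-moment correction inherited from the mean term) are calibrated precisely so that the two summands together stay strictly below $\ell$. Combining the probability $1-1/r$ from the off-diagonal bound with the deterministic diagonal control then yields \eqref{eqn: main probability}, completing the proof.
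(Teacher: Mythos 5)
Your proposal follows essentially the same route as the paper: the same split of $\bfH^N_s$ into its sampled diagonal plus hollow (off-diagonal) part, Weyl's inequality to reduce the conditioning claim to a spectral-norm tail bound on the hollow submatrix, decoupling plus Rudelson/Khintchine-type uniform-sampling estimates (Tropp's machinery) to control that norm, and a final verification that \eqref{eqn: m est} forces $\ell-\tau(m)>0$. The one loose point is that a bound on $\mathbb{E}\|\mathbf{E}\|_2$ alone cannot be ``converted'' into a $1-1/r$ tail; as in the paper, one needs moment bounds of every order $p$ (growing like $\alpha\sqrt{p}+\beta$), which is exactly what the decoupling-plus-sampling lemmas supply and what the $e^{1/4}$ sub-Gaussian conversion step consumes.
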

We leave the proof to the later part of this section. Besides the general interpretations previously stated in (i)-(iii) \cref{subsec: strategy results}, we provide more thorough explanations below.
\begin{itemize}
\item 
The condition number threshold appearing in \eqref{eqn: main probability} is approximately $\frac{L}{\ell}$, the ratio of the maximal and the minimal diagonal entries in ${\bf H}^N$, with perturbations encoded in $\tau$ that we call the distortion term. Notably, $\tau(m)$ is asymptotically linear in the sample size $m$ for large $m$. Intuitively, a smaller sample size $m$ brings less distortion $\tau(m)$ and then the conditioning is closer to $\frac{L}{\ell}$. This matches the intuition that the system behaves better when the number of to-be-reconstructed parameter is small.
\item The success probability is $1-\frac{1}{r}$. This means bigger $r$ gives higher probability for the subsampled Hessian to be well-conditioned. This also matches the intuition that the system behaves better when one has a large number of measurements.
\item To make sense of the theorem, we also need the term $\ell - \tau(m)$ to be strictly greater than $0$. This assumption is satisfied by the requirement on $m$ in \eqref{eqn: m est}. A more careful derivation also suggests
\begin{equation}
\label{eqn: condition threshold}
0< \frac{L+\tau(m)}{\ell-\tau(m)}\leq \frac{73r(L+\ell)}{\ell}.
\end{equation}
Please see \cref{proof: moment bound} for the complete proof of the claim above.
\item Noticing that ${\bf H}^N$ has rank $r$, due to Cauchy-Schwarz, we obtain the basic estimates of
\[
1\leq \frac{\F\left( {\bf H}^N\right)}{\left\|{\bf H}^N \right\|_2} = \frac{\sum_{i=1}^r \lambda_i}{\max_{i} \lambda_i} \leq r,\quad  1\leq \frac{\F\left( {\bf H}^N\right)^2}{\left\|{\bf H}^N \right\|_F^2} = \frac{(\sum_{i=1}^r \lambda_i)^2}{\sum_{i=1}^r \lambda_i^2} \leq r.\]
This suggests the upper bound for $m$ in \eqref{eqn: m est} is roughly $\mathcal{O}(r)$ up to a constant factor. When the matrix is incoherent, meaning with a large $\mu$, such constant scaling becomes worse. \cref{thm: well-conditioned hess} concludes that when the number $m$ (of to-be-reconstructed parameters) scales linearly with the number of measurements $r$, then with high probability, the global minimizer is locally unique and the problem is well-conditioned in its neighborhood.
\end{itemize}

\subsection{Proof ingredients}
\label{subsec: proof ingredients}
In this subsection, we illustrate the key technical results and prepare for the proof of \cref{thm: well-conditioned hess}. 

We use the following short-hand notation for the off-diagonal Gram matrix:
\[
{\bf M}^N = {\bf H}^N - \text{diag}\left({\bf H}^N\right) \in \R^{N \times N}. 
\]
Similarly, the sketched submatrix of ${\bf M}^N$ is denoted as
\begin{equation}
\label{eqn: M_s}
{\bf M}_s^N  = {\bf H}_s^N -\text{diag} \left({\bf H}_s^N\right) = {\bf S}^N {\bf H}^N {\bf S}^{N\top} - \text{diag}\left({\bf S}^N {\bf H}^N {\bf S}^{N\top}\right) \in \R^{m \times m}.  
\end{equation}

\begin{lemma}
\label{prob comparison}
Under the same assumption as in \cref{thm: well-conditioned hess}, the probability of the well-conditioned Hessian has the following upper bound:
\begin{equation}
\label{eqn: prob comparison}
\mathbb{P}\left( \cond\left({\bf H}^N_s\right) \leq \frac{L+\tau(m)}{\ell- \tau(m)} \right)
\geq \mathbb{P}\left(\left\|{\bf M}^N_s \right\|_2 \leq \frac{\F\left({\bf H}^N\right)}{N}\,\tau(m) \right). 
\end{equation}
\end{lemma}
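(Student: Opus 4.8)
The plan is to prove the probability inequality \eqref{eqn: prob comparison} by showing that the event on its right-hand side is \emph{contained} in the event on its left-hand side; monotonicity of $\mathbb{P}$ then yields the claim at once. This reduces the lemma to a purely deterministic spectral-perturbation statement: on the event $\left\{\left\|{\bf M}^N_s\right\|_2 \leq \frac{\F\left({\bf H}^N\right)}{N}\tau(m)\right\}$, the condition number $\cond({\bf H}^N_s)$ is bounded above by $\frac{L+\tau(m)}{\ell-\tau(m)}$. There is no probabilistic content in the present argument itself; the randomness is entirely absorbed into the right-hand event and treated elsewhere.

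First I would split the sketched Hessian into its diagonal and off-diagonal parts, ${\bf H}^N_s = \diag\left({\bf H}^N_s\right) + {\bf M}^N_s$, exactly as in \eqref{eqn: M_s}. Because ${\bf S}^N$ merely selects the rows and columns indexed by $\omega_1,\dots,\omega_m$, the diagonal entries of ${\bf H}^N_s$ are precisely the sampled diagonal entries $\{{\bf H}^N_{\omega_i,\omega_i}\}_{i=1}^m$ of ${\bf H}^N$. By the definition of the diagonal variation parameters in \eqref{eqn: diag bound}, each such entry lies in the interval $\left[\frac{\F\left({\bf H}^N\right)}{N}\ell,\ \frac{\F\left({\bf H}^N\right)}{N}L\right]$, so the eigenvalues of the diagonal matrix $\diag\left({\bf H}^N_s\right)$ (which are just its diagonal entries) are all sandwiched between these two values. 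I note that this bound holds irrespective of whether the uniform samples $\{\omega_i\}$ happen to repeat, since the conclusion depends only on the values of the selected diagonal entries.

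Next I would invoke Weyl's perturbation inequality, treating ${\bf M}^N_s$ as a symmetric perturbation of $\diag\left({\bf H}^N_s\right)$. This gives $\lambda_1\!\left({\bf H}^N_s\right) \leq \frac{\F\left({\bf H}^N\right)}{N}L + \left\|{\bf M}^N_s\right\|_2$ for the top eigenvalue and $\lambda_m\!\left({\bf H}^N_s\right) \geq \frac{\F\left({\bf H}^N\right)}{N}\ell - \left\|{\bf M}^N_s\right\|_2$ for the bottom one. Restricting to the event $\left\|{\bf M}^N_s\right\|_2 \leq \frac{\F\left({\bf H}^N\right)}{N}\tau(m)$ and substituting, both bounds carry the common factor $\frac{\F\left({\bf H}^N\right)}{N}$, which cancels in the ratio and produces $\cond\!\left({\bf H}^N_s\right) = \lambda_1/\lambda_m \leq \frac{L+\tau(m)}{\ell-\tau(m)}$. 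The one point that must not be glossed over is verifying that the denominator $\ell - \tau(m)$ is strictly positive, so that the smallest eigenvalue is bounded away from zero and the condition number is well-defined; this positivity is exactly what the sample-size restriction \eqref{eqn: m est} secures, and I would cite that bound at this step.

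The hard part is not located in any of these steps, which amount to a short Weyl/Gershgorin-type estimate, but rather downstream: the genuine difficulty is establishing that the right-hand event has probability at least $1-\frac{1}{r}$, i.e. controlling the operator norm $\left\|{\bf M}^N_s\right\|_2$ of the randomly subsampled off-diagonal block through matrix concentration, where the coherence parameter $\mu$ in \eqref{eqn: coherence} enters. That analysis is deferred to the later technical lemmas. The sole role of the present lemma is to convert the conditioning question into an operator-norm tail bound, and so the only care required here is the bookkeeping that the sampled diagonal remains in $\left[\frac{\F\left({\bf H}^N\right)}{N}\ell,\ \frac{\F\left({\bf H}^N\right)}{N}L\right]$ together with the positivity $\ell - \tau(m) > 0$.
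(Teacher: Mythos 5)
Your proposal is correct and follows essentially the same route as the paper's proof: split ${\bf H}^N_s$ into $\diag\left({\bf H}^N_s\right) + {\bf M}^N_s$, bound the sampled diagonal entries via \eqref{eqn: diag bound}, apply Weyl's perturbation inequality to get $\lambda_1\left({\bf H}^N_s\right) \leq L\frac{\F\left({\bf H}^N\right)}{N} + \left\|{\bf M}^N_s\right\|_2$ and $\lambda_m\left({\bf H}^N_s\right) \geq \ell\frac{\F\left({\bf H}^N\right)}{N} - \left\|{\bf M}^N_s\right\|_2$, and conclude by event containment and monotonicity of $\mathbb{P}$. The only difference is that the paper verifies, inside this lemma's proof, that \eqref{eqn: m est} actually forces $\ell - \tau(m) \geq \frac{\ell\left\|{\bf H}^N\right\|_2}{73\,\F\left({\bf H}^N\right)} > 0$ by bounding the two terms of $\tau(m)$ separately, whereas you defer this to a citation of \eqref{eqn: m est}; that plug-in computation is mechanical but should be written out to make the argument self-contained.
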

The proof of \cref{prob comparison} is in \cref{proof: prob comparison}. This lemma transfers our focus from analyzing the conditioning of $\bfH^N_s$ to controlling the tail bound of $\|{\bf M}^N_s \|_2$. As we shall see below, this spectral norm quantity $\|{\bf M}^N_s \|_2$ is indeed a sub-Gaussian random variable, hence a sub-Gaussian type concentration inequality can be applied. We now recall the specific definition.

\begin{proposition}[sub-Gaussian distribution (Proposition 10 of \cite{T08-1})]
\label{def: subgaussian}
Let $\alpha, \beta, D>0$ be finite numbers such that the non-negative random variable $X$ satisfies
\begin{equation}
\label{eqn: subgaussian condition}
\left(\mathbb{E} X^p\right)^{\frac{1}{p}} \leq \alpha \sqrt{p} + \beta, \quad \text{for~all~} p \geq D.
\end{equation}
Then for all $u \geq \sqrt{D},$ one has the tail probability
\begin{equation}
\label{eqn: subgaussian tail}
\mathbb{P} \left\{X \geq  e^{\frac{1}{4}}\,\left(\alpha u + \beta \right)\right\} \leq e^{-\frac{u^2}{4}}.
\end{equation}
\end{proposition}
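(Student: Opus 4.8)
The plan is to convert the moment hypothesis \eqref{eqn: subgaussian condition} into the tail bound \eqref{eqn: subgaussian tail} by a polynomial Markov inequality in which the moment order is left free and then chosen optimally. Since $X \geq 0$, for any real exponent $p > 0$ the map $x \mapsto x^p$ is monotone on $[0,\infty)$, so the events $\{X \geq t\}$ and $\{X^p \geq t^p\}$ coincide, and Markov's inequality yields $\mathbb{P}(X \geq t) \leq \mathbb{E}[X^p]/t^p$ for every threshold $t > 0$. Substituting the hypothesis gives $\mathbb{P}(X \geq t) \leq \big((\alpha\sqrt{p}+\beta)/t\big)^p$, valid for all $p \geq D$.

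The crux is the choice of $p$. To align with the desired conclusion I would take $p = u^2$, which is admissible precisely because the assumption $u \geq \sqrt{D}$ forces $p = u^2 \geq D$. This choice makes $\alpha\sqrt{p}+\beta = \alpha u + \beta$, so selecting the threshold $t = e^{1/4}(\alpha u + \beta)$ collapses the base of the exponential to $e^{-1/4}$:
\[
\mathbb{P}\left\{X \geq e^{1/4}(\alpha u + \beta)\right\} \leq \left(\frac{\alpha u + \beta}{e^{1/4}(\alpha u + \beta)}\right)^{u^2} = e^{-u^2/4},
\]
which is exactly \eqref{eqn: subgaussian tail}. The constant $e^{1/4}$ appearing in the threshold and the factor $\tfrac14$ in the exponent are tied together through $\log e^{1/4} = \tfrac14$.

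I do not expect any genuinely hard step here; the result is a clean moment-to-tail conversion. The only points that warrant care are, first, the use of a non-integer moment order, which is legitimate because non-negativity of $X$ makes $\mathbb{E}[X^p]$ well defined for every real $p>0$ and the hypothesis supplies finiteness on the range $p \geq D$; and second, the admissibility constraint $p = u^2 \geq D$, which is built into the stated hypothesis $u \geq \sqrt{D}$. I would also remark that $p = u^2$ is a convenient rather than a genuinely optimal choice: the exact minimizer of $\big((\alpha\sqrt{p}+\beta)/t\big)^p$ over $p$ has no clean closed form, and the slack introduced by this suboptimal selection is exactly what the multiplicative factor $e^{1/4}$ in the threshold is there to absorb.
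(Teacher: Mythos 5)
Your proof is correct: Markov's inequality applied to $X^p$ with the moment order chosen as $p=u^2$ and threshold $t=e^{1/4}(\alpha u+\beta)$ gives exactly the claimed tail bound, and the admissibility $p=u^2\geq D$ is precisely what $u\geq\sqrt{D}$ guarantees. The paper itself does not prove this proposition (it is quoted from Proposition 10 of the cited Tropp reference), and your argument is essentially the same moment-to-tail conversion used there, so there is nothing to add.
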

There are a number of equivalent definitions for sub-Gaussian and the characterization of the tail bound. We refer the readers to Proposition 2.5.2 in \cite{V18}.

In the following lemma, we provide the moment estimation of the random variable $\| {\bf M}^N_s\|_2$ and show that it exhibits a sub-Gaussian decay by its higher moment bound.
\begin{lemma}
\label{moment bound}
Under the same assumption as in \cref{thm: well-conditioned hess}, for $p\geq 2,$ the $p$-th moment is bounded by 
\begin{equation}
\label{eqn: moment bound}
\begin{array}{ll}
\displaystyle \left(\mathbb{E} \left\| {\bf M}^N_s\right\|_2^p\right)^{\frac{1}{p}} 
& \displaystyle \leq \frac{2m}{N} \left\|{\bf H}^N\right\|_2  + 12\sqrt{\max(\log m, \frac{p}{2})} \,\mu\, \frac{\sqrt{m}}{N}\,\left\| {\bf H}^N\right\|_F.
\end{array}
\end{equation}
\end{lemma}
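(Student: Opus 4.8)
The plan is to treat $\bfM^N_s$ as the off-diagonal part of the random principal submatrix of the zero-diagonal Gram matrix $\bfM^N$, and to bound the moments of its spectral norm through the standard matrix-concentration pipeline for sampled matrices: decoupling followed by the noncommutative Khintchine (Rudelson) inequality. First I would record the structural facts that will drive the two terms in \eqref{eqn: moment bound}. Writing $\phi_i \in \R^r$ for the $i$-th row of $\pmb{\Phi}^N$, the sampled matrix is $\Psi := {\bf S}^N\pmb{\Phi}^N \in \R^{m\times r}$ with rows $\phi_{\omega_1},\dots,\phi_{\omega_m}$, so that $\bfH^N_s = \Psi\Psi^\top$ and (for distinct indices, with repeats handled by the selector reduction below)
\[
\bfM^N_s = \Psi\Psi^\top - \diag(\Psi\Psi^\top) = \sum_{a\neq b}\langle \phi_{\omega_a},\phi_{\omega_b}\rangle\, e_a e_b^\top .
\]
Two quantities from \cref{def: parameters} enter repeatedly: the off-diagonal entries obey $\max_{i\neq j}|\bfH^N_{i,j}| = \mu\,\|\bfH^N\|_F/N$ by \eqref{eqn: coherence}, furnishing entrywise (boundedness) control, and $\mathbb{E}[\phi_{\omega}\phi_{\omega}^\top] = \tfrac1N \pmb{\Phi}^{N\top}\pmb{\Phi}^N$, whose operator norm is $\tfrac1N\|\bfH^N\|_2$, furnishing mean control. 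The factor $1/N$ throughout traces back to the uniform sampling probability.

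The main difficulty is that $\bfM^N_s$ depends \emph{quadratically} on the random indices $\{\omega_a\}$, so it is not a sum of independent matrices and matrix Bernstein/Khintchine cannot be applied directly. I would resolve this by decoupling: since the summand vanishes on the diagonal $a=b$, a de la Pe\~na--Gin\'e decoupling inequality compares the $p$-th moment of $\|\bfM^N_s\|_2$ with that of the decoupled form $\|\Psi\Psi'^\top\|_2$, where $\Psi'$ is built from an independent copy $\{\omega_b'\}$ of the sample (up to a lower-order diagonal correction $\sum_a\langle\phi_{\omega_a},\phi_{\omega_a'}\rangle e_a e_a^\top$, controlled by the same entrywise bound). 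Equivalently I would pass to the independent-Bernoulli-selector model, for which this step is standard, and then compare fixed-cardinality sampling with the Poissonized version. The zero-diagonal property of $\bfM^N$ is exactly what licenses this reduction.

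With the two copies separated, I would condition on $\Psi'$ and write the decoupled matrix as a sum of $m$ independent rank-one terms $\Psi\Psi'^\top = \sum_{a=1}^m e_a w_a^\top$ with $w_a = \Psi'\phi_{\omega_a}\in\R^m$. Splitting off the mean and applying a matrix Rosenthal/Khintchine moment inequality (Rudelson's sampling lemma) to the centered sum yields two contributions. The mean satisfies $\|\mathbb{E}\,\Psi\Psi'^\top\|_2 = m\|\bar\phi\|^2$ with $\bar\phi := \tfrac1N\sum_{i=1}^N\phi_i$, and is bounded by $\tfrac mN\|\bfH^N\|_2$ since $\mathbf{1}^\top\bfH^N\mathbf{1}\le N\|\bfH^N\|_2$; this reproduces the first term of \eqref{eqn: moment bound}. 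The centered part carries the characteristic $\sqrt{\max(\log m,\,p/2)}$ weight coming from Khintchine, and its matrix-variance proxy together with the uniform entry bound $\max_{i\neq j}|\bfH^N_{i,j}| = \mu\|\bfH^N\|_F/N$ produces the second term $\mu\,\tfrac{\sqrt m}{N}\|\bfH^N\|_F$. Integrating out $\Psi'$ by Fubini and collecting absolute constants then assembles \eqref{eqn: moment bound}.

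The step I expect to be most delicate is obtaining the \emph{Frobenius}-norm (rather than a crude operator-norm) form of the fluctuation term: the naive estimate $\|\Psi\Psi'^\top\|_2\le\|\Psi\|_2\|\Psi'\|_2$ only recovers the first term and discards the improvement encoded in $\|\bfH^N\|_F$, so the sharp conditional variance computation, with the correct power $\sqrt m$ and the coherence weighting, is where the genuine work lies. Reconciling the fixed-cardinality sampling in \eqref{eqn: sub phi} with the independent-selector model used for decoupling, and tracking the explicit constants (which ultimately feed the thresholds $146e^{1/4}$ and $149e^{1/2}$ in \eqref{eqn: m est}), are the remaining technical chores.
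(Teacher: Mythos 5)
Your overall architecture---decouple the hollow sampled matrix, then control the decoupled bilinear form conditionally, with the coherence $\mu$ supplying entrywise bounds---is the same pipeline as the paper's (which follows Section 6 of Tropp's subdictionary paper). The genuine gap is in your middle step: centering $\Psi\Psi'^\top$ and applying a matrix Rosenthal/Khintchine inequality to the independent rank-one terms $e_a\tilde w_a^\top$ cannot deliver the first term $\tfrac{2m}{N}\|\bfH^N\|_2$ \emph{without} a moment factor, and that feature is exactly what \eqref{eqn: moment bound} asserts. Any one-shot Khintchine/Rosenthal bound is $\sqrt{\max(\log m,\,p/2)}$ times the larger of two variance proxies. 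One proxy, $\max_a\|\tilde w_a\|_2\lesssim\sqrt m\,\mu\|\bfH^N\|_F/N$, is fine and yields your second term. But the other, $\bigl\|\sum_a\mathbb{E}\,\tilde w_a\tilde w_a^\top\bigr\|_2^{1/2}\approx\sqrt{\tfrac mN\|\bfH^N\|_2}\,\|\Psi'\|_2$, is of spectral-norm scale (generically $\approx\tfrac mN\|\bfH^N\|_2$ after integrating out $\Psi'$), and it inherits the same $\sqrt{\max(\log m,\,p/2)}$ weight. So the route you sketch proves only $(\mathbb{E}\|\bfM^N_s\|_2^p)^{1/p}\lesssim\sqrt{\max(\log m,\,p/2)}\,\bigl(\tfrac mN\|\bfH^N\|_2+\mu\tfrac{\sqrt m}{N}\|\bfH^N\|_F\bigr)$, strictly weaker than the lemma. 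This is not cosmetic: fed into the sub-Gaussian tail of \cref{def: subgaussian} with $u=2\sqrt{\log r}$, it would attach an extra $\sqrt{\log r}$ to the first term of $\tau(m)$ in \eqref{eqn: distortion} and shrink the admissible $m$ in \eqref{eqn: m est} by a $\log r$ factor. Relatedly, your identification of the spectral term with the mean, $\|\mathbb{E}\Psi\Psi'^\top\|_2=m\|\bar\phi\|_2^2$, is a red herring: in the correct argument that term does not come from centering at all, but from the sampling contraction factor.

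What the paper does instead is apply Tropp's sampling theorem (\cref{sampling result}) \emph{twice, conditionally}: column sampling by ${\bf S}_2$ given ${\bf S}_1$, then row sampling by ${\bf S}_1$ on term (I), with the coherence entering only through the $\|\cdot\|_{1,2}$ norms. The point of that theorem is precisely that its spectral term $\sqrt{m/N}\,\|{\bf A}\|_2$ carries \emph{no} $\sqrt{\max(\log m, p)}$ factor---a feature obtained by Rudelson's self-bounding iteration on the PSD quadratic form (the unknown moment reappears on the right and one solves a quadratic inequality), not by a direct Khintchine application. The two prefactors then multiply to $\tfrac{\sqrt{m_1m_2}}{N/2}\le\tfrac mN$ acting on $\|\bfM^N\|_2\le\|\bfH^N\|_2$ (Weyl monotonicity), giving the clean first term. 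Two smaller defects in your plan: (i) the diagonal correction in your decoupling is \emph{not} ``controlled by the same entrywise bound''---on the collision event $\omega_a=\omega_a'$ the entry is a diagonal entry of $\bfH^N$, which the coherence \eqref{eqn: coherence} does not bound (it is defined only over $i_1\neq i_2$); (ii) de la Pe\~na--Gin\'e decoupling carries a universal constant larger than the factor $2$ in Tropp's tailored decoupling (Theorem 9 of the cited work), so the explicit constants $2$ and $12$ in \eqref{eqn: moment bound}, and hence $146e^{1/4}$ and $149e^{1/2}$ in \eqref{eqn: m est}, would not be recovered by your route even if the main gap were repaired.
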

The proof of~\Cref{moment bound} is in \cref{proof: moment bound}.

Equipped with these two lemmas, we proceed with the proof for \cref{thm: well-conditioned hess}.
\begin{proof}[Proof of \cref{thm: well-conditioned hess}]
Based on the moment estimation \eqref{eqn: moment bound} in \cref{moment bound}, the random variable $\| {\bf S}^N {\bf M}^N {\bf S}^{N\top}\|_2$ is of sub-Gaussian type. In particular, according to \cref{def: subgaussian}, we set 
\[
\beta = \frac{2m}{N} \left\|{\bf H}^N\right\|_2, \quad \alpha = 6\mu\frac{\sqrt{m}}{N}\,\left\| {\bf H}^N\right\| _F, \quad D = 4\log m.
\]
Therefore, for $u = 2\sqrt{\log r} > \sqrt{D} =2\sqrt{\log m}, (r \geq m)$ by the distortion term $\tau$ \eqref{eqn: distortion}, we achieve the probability tail bound 
\begin{equation}
\label{eqn: technical prob tail}
\begin{array}{l}
\displaystyle\mathbb{P}\left( \left\|{\bf M}^N_s\right\|_2 \leq\frac{\F\left({\bf H}^N\right)}{N}\,\tau(m)\right) \\
= \displaystyle\mathbb{P}\left( \left\|{\bf M}^N_s\right\|_2 \leq 
e^{\frac{1}{4}}\left(\frac{m}{N}\left\|{\bf H}^N \right\|_2 + 6\mu \frac{\sqrt{m}}{N}\left\| {\bf H}^N\right\|_F\,2\sqrt{\log r}\right)\right)\\
\geq \displaystyle 1- e^{-\frac{4 \log r}{4}} =1- \frac{1}{r}.
\end{array}
\end{equation}
Applying the events relation in \cref{prob comparison}, we prove the general well-conditioning Hessian probability  \eqref{eqn: main probability},
\[
\mathbb{P}\left( \cond\left({\bf H}^N_s \right) \leq \frac{L+\tau(m)}{\ell -\tau(m)} \right) \geq \mathbb{P}\left( \left\|{\bf M}^N_s\right\|_2 \leq\frac{\F\left({\bf H}^N\right)}{N}\,\tau(m)\right)\geq 1- e^{-\frac{4 \log r}{4}} =1- \frac{1}{r}.\]

This completes the proof. 
\end{proof}

We make a final remark on potential improvement for the main \cref{thm: well-conditioned hess}.
\begin{remark}
\label{remark: use of parameters}
In our proof, the way that the parameters $L, \ell$ \eqref{eqn: diag bound} are employed is pessimistic. Indeed, their definitions are to look at the extreme large and the extreme small ratio compared to the average diagonal entries, so they represent the worst case scenario. If we are to examine the distribution of diagonal entries, most entries are centered around the average. Hence the worst case bound is not tight, i.e., the extreme condition imposed is more stringent than necessary for most other entries. It is clear that the entry magnitude is well-concentrated although there is a big disparity between $\ell$ and $L$.

To obtain a better bound, a quick relaxation to this requirement is to examine a high-probability cut-off: Fix a small failure probability $0<\eta< \frac{1}{2}$, and define the modified version of $\ell_0, L_0>0$ as
\begin{equation}
\label{eqn: failure prob}
\mathbb{P}\left\{ \min_{i =1,\dots, m} \left({\bf H}^N_s\right)_{i,i} <  \frac{\ell_0}{\frac{\F({\bf H}^N)}{N}}  \right\} \leq \frac{\eta}{2}, \quad \mathbb{P}\left\{ \max_{i =1,\dots, m}  \left({\bf H}^N_s \right)_{i,i} > \frac{L_0}{\frac{\F({\bf H}^N)}{N}}  \right\} \leq \frac{\eta}{2}.
\end{equation}
Clearly, we have $\ell_0 \geq \ell, \quad L_0 \leq L$. With these re-defined quantities, the success probability \eqref{eqn: main probability} in the main theorem is improved to:
\[
\mathbb{P}\left( \cond\left({\bf H}^N_s \right) \leq \frac{L_0+\tau(m)}{\ell_0 - \tau(m)} \right) \geq 1-\frac{1}{r} - \frac{\eta}{2} - \frac{\eta}{2} =1-\frac{1}{r} - \eta.
\]
where the condition number threshold has a more moderate upper bound. In~\cref{fig: diagonal bound} we show $\ell_0$ and $L_0$ with cut-off probability $\frac{\eta}{2}=0.1$. In this case, the bound $\frac{L_0}{\ell_0}= \frac{143.9}{63.2} \approx 2.28$ is an improvement over $\frac{L}{\ell}=\frac{163}{57.7} \approx 2.83$, and the sacrifice on the probability is $\eta =0.2$.

\begin{figure}[!htb]
\centering
\includegraphics[width=12cm]{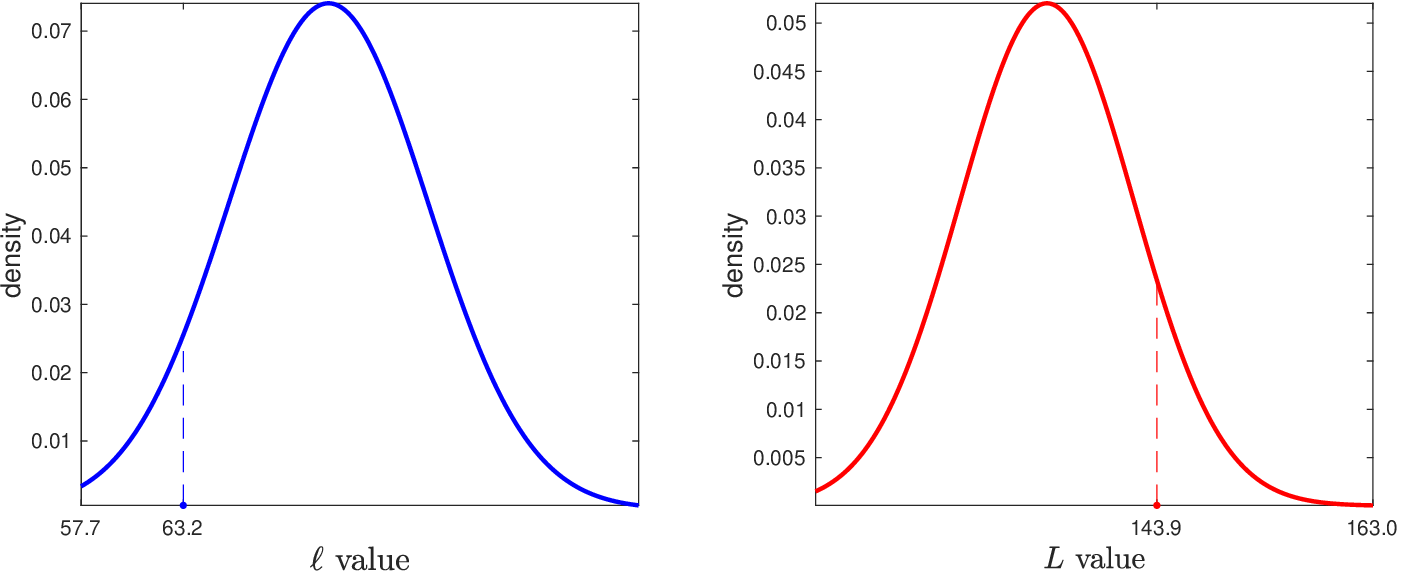}
\caption{Distributions of $\min$ and $\max$ diagonal entries in $\bfH^N_s$ out of 10,000 simulations. We sample $m = 30$ from the Gauss-Newton Hessian ${\bf H}^N = \pmb{\Phi}^N \pmb{\Phi}^{N\top}$ where $\pmb{\Phi}^N\in \R^{N \times r}$ is generated by a random Gaussian matrix in the size of $N = 5000, r = 100.$} 
\label{fig: diagonal bound}
\end{figure}
\end{remark}
\section{Details of the proofs}\label{sec:technical}
The main theorem is built upon two technical lemmas:~\cref{prob comparison} on estimating the probability of well-conditioning, and~\cref{moment bound} on estimating high moments of an intermediate variable $\|\bfM_s^N\|_2$~\eqref{eqn: M_s}. We present their proof in the following two subsections respectively.

\subsection{Probability bound, \cref{prob comparison}}
\label{proof: prob comparison}
In order to certify a good conditioning of the subsampled Hessian ${\bf H}^N_s$, \cref{prob comparison} informs us that the key quantity is the spectral norm of the hollow Gram matrix ${\bf M}^N_s$ in \eqref{eqn: M_s}. We establish this connection and prove \cref{prob comparison} as follows.

\begin{proof}[Proof of \cref{prob comparison}]
Recall  
\[
{\bf M}^N_s= {\bf H}^N_s- \text{diag}\left({\bf H}^N_s\right) \in \R^{m \times m}.
\]
By Weyl's inequality (originally in \cite{W12}, or Corollary 6.3.8 of \cite{HJ91}), for $k = 1, \dots, m,$
\begin{equation}
\label{eqn: sigma perturb}
\left\vert\lambda_k\left({\bf H}^N_s \right) - \lambda_k\left(\text{diag}\left({\bf H}^N_s\right)\right) \right\vert\leq \left\|{\bf M}^N_s\right\|_2. \end{equation}
Take $k = 1,$ and using the reverse triangle inequality, we know that
\[
\begin{array}{ll}
\displaystyle \lambda_1\left({\bf H}^N_s \right) & \displaystyle\leq\lambda_1\left(\text{diag}\left({\bf H}^N_s \right)\right) +\left\|{\bf M}^N_s \right\|_2\\
&\displaystyle =  \max_{i = 1, \dots, m}\left({\bf H}^N_s\right)_{i,i} +\left\|{\bf M}^N_s \right\|_2\\
& \displaystyle \leq  L\frac{\F\left({\bf H}^N \right)}{N}+\left\|{\bf M}^N_s\right\|_2.\end{array}
\]
The last inequality is obtained due to \eqref{eqn: diag bound}. Analogously as above derivations, let $k=m$, we have 
\[
\lambda_m\left({\bf H}^N_s\right)\geq \ell\frac{\F\left({\bf H}^N \right)}{N}-  \left\|{\bf M}^N_s\right\|_2.
\]
The event 
\begin{equation}
\label{eqn: event SMS^T}
\left\{\left\|{\bf M}^N_s \right\|_2 \leq \frac{\F\left({\bf H}^N \right)}{N}\,\tau(m) \right\} 
\end{equation}
implies the outcomes:
\[
\left\{
\begin{array}{l}
\displaystyle \lambda_1\left( {\bf H}^N_s \right)\leq L\frac{\F\left({\bf H}^N \right)}{N} +\left\| {\bf M}^N_s \right\|_2 \leq  \frac{\F\left({\bf H}^N \right)}{N}\left(L+\tau(m)\right),\\
\displaystyle \lambda_m\left( {\bf H}^N_s \right)\geq  \ell \frac{\F\left({\bf H}^N \right)}{N} -\left\|{\bf M}^N_s \right\|_2 \geq\frac{\F\left({\bf H}^N \right)}{N}\left(\ell-\tau(m)\right) .\\
\end{array}\right.
\]
And consequently, assuming $\ell-\tau(m)>0$, the
event \eqref{eqn: event SMS^T} leads to another event: 
\[
\left\{\cond\left({\bf H}^N_s \right) = \frac{\lambda_1}{\lambda_m} \left({\bf H}^N_s \right)\leq \frac{L+\tau(m)}{\ell- \tau(m)}\right\}. 
\]
Thus we have the probability relation of the two events:
\[
\mathbb{P}\left( \cond\left({\bf H}^N_s\right) \leq \frac{L+\tau(m)}{\ell- \tau(m)} \right)
\geq \mathbb{P}\left(\left\| {\bf M}^N_s \right\|_2 \leq \frac{\F\left({\bf H}^N\right)}{N}\,\tau(m) \right)\,.
\]
To complete the proof, we still need to show that~\eqref{eqn: m est} leads to $\ell-\tau(m)>0$. To do so, we recall from~\eqref{eqn: distortion}:
\begin{equation}
\tau(m) =\underbrace{2e^{\frac{1}{4}}m\frac{\left\|{\bf H}^N\right\|_2}{\F\left({\bf H}^N\right)}}_{\text{Term I}} + \underbrace{12\mu e^{\frac{1}{4}} \sqrt{m\, \log r}\frac{\left\|{\bf H}^N \right\|_F}{\F\left({\bf H}^N\right)}}_{\text{Term II}}\,.
\end{equation}
According to the condition on the sampling dimension $m$ \eqref{eqn: m est}, 
\[
\text{Term I}\leq  \frac{\ell}{73} \left( 1 - \frac{\|{\bfH}^N \|_2}{\F({\bfH}^N)}\right), \quad \text{Term II} \leq \frac{12 \ell}{\sqrt{149}} \leq \frac{72 \ell}{73}.
\]
Consequently:
\[
\tau(m) \leq \ell - \frac{\ell\|{\bfH}^N \|_2}{73 \F({\bfH}^N)}, ~~\text{and}~~\ell-\tau(m)\geq\frac{\ell\|{\bfH}^N \|_2}{73\F({\bfH}^N)}>0\,.
\]
This completes the proof. 
Moreover, a more careful derivation derives: 
\[
\frac{L + \tau(m)}{\ell - \tau(m)} \leq  \frac{L+\ell }{\frac{\ell\|{\bfH}^N \|_2}{73\F({\bfH}^N)} } =\frac{73 (L+\ell)}{\ell}\frac{\|{\bfH}^N \|_2}{\F({\bfH}^N)} \leq \frac{73 r\,(L+\ell)}{\ell},
\]
proving~\eqref{eqn: condition threshold}. 

\end{proof}

\subsection{Moment bound \cref{moment bound}}
\label{proof: moment bound}
\cref{moment bound} is to quantify the probability of $\left\|{\bf M}^N_s \right\|_2$ being controlled within a certain range. To get this concentration tail, it is helpful to understand the moments of $\left\|{\bf M}^N_s \right\|_2.$ Since higher moment bounds are crucial for characterizing this random variable’s distribution, we next pin down its concentration property. 

We remark that \cref{moment bound} is a generalization of Eqn. (6.1) of Tropp \cite{T08-1}. The key difference is that we remove the assumption in \cite{T08-1} that $\pmb{\Phi}^N$ has equal-norm rows. Nevertheless, we present the essential analysis for the moment bound \eqref{eqn: moment bound}. The proof framework uses the decoupling technique below. 
\begin{lemma}[Decoupling - Theorem 9 of \cite{T08-1}]
Fix an even integer $N \in \mathbb{N}.$ There exists a submatrix ${\widehat{\bf M}}^N \in \R^{\frac{N}{2} \times \frac{N}{2}}$ of ${\bf M}^N \in \R^{N \times N}$ so that
\begin{equation}
\label{eqn: decoupling}
\left(\mathbb{E} \left\| {\bf S}^N{\bf M}^N  {\bf S}^{N\top}\right\|_2^p\right)^{\frac{1}{p}} \leq 2\,\max_{m_1+m_2=m,\atop m_1, m_2 \in \mathbb{N}^+} \left(\mathbb{E} \left\| {\bf S}_1^N {\widehat{\bf M}}^N  {\bf S}_2^{N\top}\right\|^p\right)^{\frac{1}{p}}.\end{equation}
Here, ${\bf S}_1^N \in \R^{m_1 \times \frac{N}{2}}$ and ${\bf S}_2^N \in \R^{m_2 \times \frac{N}{2}}$ are independent random sampling matrices with sample size respectively $m_1, m_2 \in \mathbb{N}^+.$
The maximization in RHS of \eqref{eqn: decoupling} is taken over all possible combinations of $m_1, m_2$ satisfying $m_1+m_2 = m.$
\end{lemma}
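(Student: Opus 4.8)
The plan is to prove \eqref{eqn: decoupling} as a \emph{decoupling} estimate in the spirit of de la Peña--Montgomery-Smith and Tropp, whose engine is an auxiliary random \emph{balanced partition} of the coordinate set together with the hollow (zero-diagonal) structure of $\mathbf{M}^N$. First I would pass from the $m\times m$ matrix $\mathbf{S}^N\mathbf{M}^N\mathbf{S}^{N\top}$ to the $N\times N$ projected matrix $\mathbf{R}\,\mathbf{M}^N\,\mathbf{R}$, where $\mathbf{R}=\mathbf{S}^{N\top}\mathbf{S}^N$ is the diagonal projection onto the (distinct) sampled indices; the two matrices share the same nonzero spectrum, so $\|\mathbf{S}^N\mathbf{M}^N\mathbf{S}^{N\top}\|_2=\|\mathbf{R}\,\mathbf{M}^N\,\mathbf{R}\|_2$ and it suffices to control the latter. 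I then draw a uniformly random balanced partition $\{A,B\}$ of $\{1,\dots,N\}$ with $|A|=|B|=N/2$ (this is where evenness of $N$ is used), independent of the sampling, and write $\mathbf{R}_A,\mathbf{R}_B$ for the restrictions of $\mathbf{R}$ to $A$ and $B$.

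The crux is an exact averaging identity. For $i\neq j$ the pair lands in \emph{cross} position (one endpoint in $A$, the other in $B$) with probability $\frac{N}{2(N-1)}$, while the diagonal terms $i=j$ carry no weight because $\mathbf{M}^N$ is hollow. Hence, taking expectation over the partition only, $\mathbb{E}_{\{A,B\}}\big[\mathbf{R}_A\mathbf{M}^N\mathbf{R}_B+\mathbf{R}_B\mathbf{M}^N\mathbf{R}_A\big]=\frac{N}{2(N-1)}\,\mathbf{R}\,\mathbf{M}^N\,\mathbf{R}$, so $\mathbf{R}\,\mathbf{M}^N\,\mathbf{R}=\frac{2(N-1)}{N}\,\mathbb{E}_{\{A,B\}}\big[\cdots\big]$ with prefactor strictly below $2$. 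This is exactly where the stated constant comes from: summing \emph{both} cross orientations raises the cross probability from $\frac14$ to roughly $\frac12$, and the two orientations assemble into a single self-adjoint block operator whose only nonzero off-diagonal block is $C=\mathbf{R}_A\mathbf{M}^N\mathbf{R}_B$, and whose spectral norm therefore equals $\|C\|_2$. Applying Jensen's inequality to the convex map $X\mapsto\|X\|_2^p$ then yields $\mathbb{E}_{\mathbf{S}}\|\mathbf{R}\,\mathbf{M}^N\,\mathbf{R}\|_2^p\le 2^p\,\mathbb{E}_{\{A,B\}}\mathbb{E}_{\mathbf{S}}\|\mathbf{R}_A\mathbf{M}^N\mathbf{R}_B\|_2^p$.

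It remains to recognize the decoupled object as an instance of $\mathbf{S}_1^N\widehat{\mathbf{M}}^N\mathbf{S}_2^{N\top}$. Conditioned on the partition $\{A,B\}$ and on the counts $m_1,m_2$ of sampled indices falling in $A$ and in $B$ (so $m_1+m_2=m$), the block $\mathbf{R}_A\mathbf{M}^N\mathbf{R}_B$ is precisely $\mathbf{S}_1^N\widehat{\mathbf{M}}^N\mathbf{S}_2^{N\top}$ with $\widehat{\mathbf{M}}^N=\mathbf{M}^N_{A\times B}\in\R^{\frac{N}{2}\times\frac{N}{2}}$, where $\mathbf{S}_1^N$ selects the sampled $A$-indices and $\mathbf{S}_2^N$ the sampled $B$-indices. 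Because these two index subsets are carved out of disjoint coordinate blocks, the samplers $\mathbf{S}_1^N,\mathbf{S}_2^N$ are conditionally independent, matching the hypothesis of the lemma. Bounding the random split by its worst case produces the maximum over $m_1+m_2=m$, and bounding the partition-average by the partition-maximum singles out a \emph{fixed} maximizing partition $\{A^\ast,B^\ast\}$; its cross block $\widehat{\mathbf{M}}^N=\mathbf{M}^N_{A^\ast\times B^\ast}$ is the submatrix whose existence is asserted. Taking $p$-th roots then delivers \eqref{eqn: decoupling}.

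The main obstacle is the conditional-independence bookkeeping in the last step: one must verify that, after conditioning on $\{A,B\}$, the sample restricted to $A$ and the sample restricted to $B$ are genuinely independent sub-samples of the prescribed sizes, so that the conditional law of $\mathbf{R}_A\mathbf{M}^N\mathbf{R}_B$ is exactly that of a product-sampler bilinear form. This is transparent in the independent-selector model but needs a short selectorization argument if the $m$ indices are drawn uniformly \emph{without} replacement, and it is precisely what forces the passage to the worst split, since $m_1,m_2$ are themselves random. A secondary delicate point is keeping the averaging identity exact: hollowness of $\mathbf{M}^N$ is indispensable there, and the factor $2$ rather than $4$ hinges on folding the two cross orientations into one self-adjoint block, so any looseness in that step would degrade the advertised constant.
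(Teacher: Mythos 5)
The paper never proves this lemma at all---it is imported verbatim as Theorem~9 of \cite{T08-1}---so the relevant benchmark is Tropp's own argument, and your reconstruction is essentially that argument (the Bourgain--Tzafriri random-partition decoupling, in the de la Pe\~na--Montgomery-Smith tradition). The key steps all check out: for a uniformly random balanced partition the combined cross-probability of an off-diagonal pair is $\frac{N}{2(N-1)}$, hollowness of $\mathbf{M}^N$ makes the averaging identity exact, the prefactor $\frac{2(N-1)}{N}<2$ yields precisely the advertised constant after Jensen applied to $X\mapsto\|X\|_2^p$, the identity $\left\|\mathbf{R}_A\mathbf{M}^N\mathbf{R}_B+\mathbf{R}_B\mathbf{M}^N\mathbf{R}_A\right\|_2=\left\|\mathbf{R}_A\mathbf{M}^N\mathbf{R}_B\right\|_2$ follows from the Hermitian block dilation, and conditioning on the random split sizes plus a worst-case bound over $m_1+m_2=m$ and over partitions produces the fixed submatrix $\widehat{\mathbf{M}}^N=\mathbf{M}^N_{A^\ast\times B^\ast}$ of the statement. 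Two points should be made explicit. First, the dilation step silently uses symmetry of $\mathbf{M}^N$, so that $\mathbf{R}_B\mathbf{M}^N\mathbf{R}_A=\left(\mathbf{R}_A\mathbf{M}^N\mathbf{R}_B\right)^\top$; this holds here because $\mathbf{H}^N=\pmb{\Phi}^N\pmb{\Phi}^{N\top}$ is symmetric, but it is a hypothesis, not a triviality. Second, you flag the sampling-model issue but locate the difficulty in the wrong model: your opening reduction $\left\|\mathbf{S}^N\mathbf{M}^N\mathbf{S}^{N\top}\right\|_2=\left\|\mathbf{R}\,\mathbf{M}^N\mathbf{R}\right\|_2$ requires the $m$ sampled indices to be \emph{distinct}. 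In the uniform-subset (without-replacement) model both this reduction and your conditional-independence bookkeeping go through exactly as you describe, with $S\cap A$ a uniform $m_1$-subset of $A$ independent of $S\cap B$ given the counts. Under the paper's literal wording ($\omega_i$ drawn i.i.d.\ uniformly, hence possibly with repeats), however, $\mathbf{R}=\mathbf{S}^{N\top}\mathbf{S}^N$ is a multiplicity-weighted diagonal rather than a projection, one only gets $\left\|\mathbf{S}^N\mathbf{M}^N\mathbf{S}^{N\top}\right\|_2=\left\|\mathbf{R}^{1/2}\mathbf{M}^N\mathbf{R}^{1/2}\right\|_2$, and duplicated indices duplicate rows and genuinely inflate the spectral norm (a pair of indices each sampled twice doubles it), so it is the with-replacement case---not the without-replacement one---that needs the extra ``selectorization'' treatment; Tropp's theorem, and hence this lemma, lives in the subset model.

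Minor loose ends, none fatal: the events $m_1=0$ or $m_2=0$ contribute zero since the cross block vanishes, so restricting the maximum to $m_1,m_2\in\mathbb{N}^+$ is harmless for $m\geq 2$; and your maximizing partition $\{A^\ast,B^\ast\}$---hence $\widehat{\mathbf{M}}^N$---may depend on $p$, which the statement permits and which is harmless downstream, since the proof of \cref{moment bound} only invokes bounds ($\|\widehat{\mathbf{M}}^N\|_2\leq\|\mathbf{M}^N\|_2$ and the coherence-based column-norm bound) that hold uniformly over all such submatrices.
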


To quantify the moment of interest $\left\| {\bf S}^N {\bf H}^N {\bf S}^{N\top} \right\|_2$, we instead focus on analyzing 
\begin{equation}
\label{eqn: S1 S2}
\left(\mathbb{E} \| {\bf S}^N_1 {\widehat{\bf M}}^N  {\bf S}_2^{N\top}\|_2^p  \right)^{\frac{1}{p}} = \left( \mathbb{E}_1 \left[ \mathbb{E}_2\left[ \|{\bf S}^N_1 {\widehat{\bf M}}^N  {\bf S}_2^{N\top} \|_2^p~\Big \vert ~{\bf S}_1\right]\right]\right)^{\frac{1}{p}}. 
\end{equation}
Regarding that, we present the useful information of both column and row samplings. The result relates to one specific type of matrix norms defined below. 

\begin{definition}
For a matrix $\bf A$ with $N$ columns, we define the induced $1,2$-norm, or equivalently the largest column Euclidean norm: 
\[
\|{\bf A} \|_{1,2}:= \sup_{\|{\bf x} \|_2 =1, \atop {\bf x} \in \R^N} \| {\bf A x}\|_1 = \max_{j=1, \dots, N} \|{\bf A}_{:,j} \|_2.
\]
\end{definition}

Next we present the key technical results for uniform sampling. 

\begin{lemma}[Theorem 8 of \cite{T08-1}]
\label{sampling result}
For a matrix ${\bf A}$ with $N$ columns. Draw a random coordinate selector ${\bf S} \in \R^{m \times N}.$
\begin{equation}
\label{eqn: column sampling}
\left(\mathbb{E} \left\| {\bf A S}^\top \right\|^p_2\right)^{\frac{1}{p}}  \leq \sqrt{\frac{m}{N}}\, \|{\bf A} \|_2 + 3\, \sqrt{\max (2\log m, \frac{p}{2}) } \, \left\|{\bf A}\right\|_{1,2}.
\end{equation}
Similarly, for row sampling, given a matrix ${\bf B}$ with $N$ rows. Draw a random coordinate selector ${\bf S} \in \R^{m \times N}.$
\begin{equation}
\label{eqn: row sampling}
\left(\mathbb{E} \left\| {\bf S B} \right\|^p_2\right)^{\frac{1}{p}}\leq \sqrt{\frac{m}{N}} \|{\bf B} \|_2 + 3\, \sqrt{\max (2\log m, \frac{p}{2}) } \,\left\|{\bf B}^\top\right\|_{1,2}.
\end{equation}
\end{lemma}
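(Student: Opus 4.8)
The plan is to prove the column-sampling bound \eqref{eqn: column sampling} first, and then obtain the row-sampling bound \eqref{eqn: row sampling} by transposition. The starting point is that drawing $m$ columns of ${\bf A}$ with i.i.d.\ uniform indices $\{\omega_i\}_{i=1}^m$ on $\{1,\dots,N\}$ turns the sampled spectral norm into a sum of i.i.d.\ rank-one positive semidefinite matrices. Since $\|{\bf A}{\bf S}^\top\|_2^2 = \|{\bf A}{\bf S}^\top{\bf S}{\bf A}^\top\|_2$ and ${\bf S}^\top{\bf S} = \sum_{i=1}^m e_{\omega_i}e_{\omega_i}^\top$, I would write
\[
\left\|{\bf A}{\bf S}^\top\right\|_2^2 = \left\|\sum_{i=1}^m {\bf X}_i\right\|_2, \qquad {\bf X}_i := {\bf A}_{:,\omega_i}{\bf A}_{:,\omega_i}^\top \succeq 0,
\]
so the target $(\mathbb{E}\|{\bf A}{\bf S}^\top\|_2^p)^{1/p}$ equals the square root of the $(p/2)$-th moment of $\|\sum_i{\bf X}_i\|_2$. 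Each summand has mean $\mathbb{E}{\bf X}_i = \tfrac1N{\bf A}{\bf A}^\top$, so $\mathbb{E}\sum_i{\bf X}_i = \tfrac{m}{N}{\bf A}{\bf A}^\top$ contributes the deterministic term $\tfrac{m}{N}\|{\bf A}\|_2^2$, which becomes $\sqrt{m/N}\,\|{\bf A}\|_2$ after the square root and matches the first term of \eqref{eqn: column sampling}.

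Next I would split $\sum_i{\bf X}_i$ into its mean plus the centered fluctuation $\sum_i({\bf X}_i-\mathbb{E}{\bf X}_i)$ and control the latter by symmetrization followed by the noncommutative Khintchine (Rudelson-type) inequality applied conditionally on $\{\omega_i\}$. The algebraic identity that drives the bound is
\[
{\bf X}_i^2 = \|{\bf A}_{:,\omega_i}\|_2^2\,{\bf X}_i \preceq \|{\bf A}\|_{1,2}^2\,{\bf X}_i, \qquad\text{hence}\qquad \Big\|\sum_i {\bf X}_i^2\Big\|_2 \leq \|{\bf A}\|_{1,2}^2\,\Big\|\sum_i {\bf X}_i\Big\|_2.
\]
This is precisely where the largest-column-norm quantity $\|{\bf A}\|_{1,2}$ enters, and because the Khintchine variance proxy is $\|(\sum_i{\bf X}_i^2)^{1/2}\|_2 \le \|{\bf A}\|_{1,2}\|\sum_i{\bf X}_i\|_2^{1/2}$, the estimate becomes self-referential: writing $E := (\mathbb{E}\|\sum_i{\bf X}_i\|_2^{q})^{1/q}$ with $q=p/2$, one arrives at an inequality of the form $E \lesssim \tfrac{m}{N}\|{\bf A}\|_2^2 + C\sqrt{q}\,\|{\bf A}\|_{1,2}\,E^{1/2}$.

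I would finish by solving this quadratic-type inequality: $E \le a + b\sqrt{E}$ forces $\sqrt{E} \le \sqrt{a} + b$, and since $\|{\bf A}{\bf S}^\top\|_2 = \sqrt{E}$, this reproduces the additive form $\sqrt{m/N}\,\|{\bf A}\|_2 + C\sqrt{q}\,\|{\bf A}\|_{1,2}$. The free moment parameter $q$ interacts with the ambient-dimension factor in the Khintchine constant; because $\sum_i{\bf X}_i$ has rank at most $m$, one may restrict to its range and replace the ambient dimension by $m$, so choosing $q \asymp \max(p/2,\log m)$ yields the $\sqrt{\max(2\log m,\,p/2)}$ structure of \eqref{eqn: column sampling}. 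The row-sampling bound \eqref{eqn: row sampling} then follows immediately by applying the column result to ${\bf B}^\top$, using $\|{\bf S}{\bf B}\|_2 = \|{\bf B}^\top{\bf S}^\top\|_2$ and $\|{\bf B}^\top\|_2 = \|{\bf B}\|_2$, while the largest column norm of ${\bf B}^\top$ is exactly $\|{\bf B}^\top\|_{1,2}$.

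I expect the main obstacle to be the constant bookkeeping through symmetrization and the sharp form of Rudelson's lemma, together with the rank-based dimension reduction that produces $\log m$ rather than $\log N$. Setting up and solving the self-bounding inequality is routine, but pinning down precisely the prefactor $3$ and the exact $\max$-structure requires the optimal choice of $q$ and tight control of the Khintchine constant.
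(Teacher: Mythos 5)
The paper itself supplies no proof of this lemma: it is imported as Theorem~8 of \cite{T08-1}, and the only argument given in the text is the one-line remark that the row bound \eqref{eqn: row sampling} follows from the column bound \eqref{eqn: column sampling} by transposition --- which is exactly your final step. Your sketch is therefore best read as a correct reconstruction of the cited source's proof rather than a genuinely different route: the reduction of $\|\mathbf{A}\mathbf{S}^\top\|_2^2$ to the spectral norm of the i.i.d.\ rank-one sum $\sum_i \mathbf{A}_{:,\omega_i}\mathbf{A}_{:,\omega_i}^\top$, centering plus Rademacher symmetrization, the noncommutative Khintchine/Rudelson estimate with the variance proxy tamed through $\mathbf{X}_i^2 = \|\mathbf{A}_{:,\omega_i}\|_2^2\,\mathbf{X}_i \preceq \|\mathbf{A}\|_{1,2}^2\,\mathbf{X}_i$, and the resolution of the self-bounding inequality $E \leq a + b\sqrt{E}$ via $\sqrt{E} \leq \sqrt{a} + b$ are precisely the Rudelson--Vershynin steps as sharpened by Tropp. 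You also correctly identify the mechanism behind $\log m$ rather than $\log N$: the Rademacher sum has rank at most $m$, so converting a Schatten-$q$ norm to the spectral norm costs only $m^{1/q} \leq e$ once $q \gtrsim \log m$, and the choice $q \asymp \max(p/2, \log m)$ produces the stated $\sqrt{\max(2\log m, p/2)}$ structure.

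Two minor points for completeness. First, closing the self-reference requires the Lyapunov/H\"older step $\bigl(\mathbb{E}\,\|\sum_i \mathbf{X}_i\|_2^{q/2}\bigr)^{1/q} \leq E^{1/2}$ with $q = p/2$, which you use implicitly; it is worth stating, since it is what lets the conditional Khintchine bound be folded back into $E$. Second, as you acknowledge, the prefactor $3$ and the exact $\max$-structure are not derived in your sketch; this constant bookkeeping (sharp Khintchine constant, optimal $q$) is the only genuinely delicate part and is carried out in \cite{T08-1}, so at the level of rigor the present paper adopts --- quoting the result --- your outline is sound and complete.
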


\cref{sampling result} (from~\cite{T08-1}) estimates the spectral norm of a submatrix whose columns are in a uniformly random collection of column set in the original matrix. This lemma generalizes the previous results in \cite{R99} and \cite{RV07} to  moments of all orders regimes. 
The row sampling can be proved by simply taking the transpose of $\bf B$ and applying the column sampling result previously stated.

\begin{proof}[Proof of \cref{moment bound}]
The proof adopts the techniques from Section 6 in \cite{T08-1}. 

\begin{enumerate}
    \item 
    Guided by the quantity of interest \eqref{eqn: S1 S2}, we first condition on ${\bf S}_1$ and consider the random column sampling of ${\bf S}_2.$ Applying the spectral norm estimation of column sampling \eqref{eqn: column sampling}, 
\[
 \left(\mathbb{E}_2\left[ \left\|{\bf S}^N_1 {\widehat{\bf M}}^N {\bf S}_2^{N\top} \right\|_2^p~\Big \vert ~{\bf S}_1\right]\right)^{\frac{1}{p}} \leq 
 \sqrt{\frac{2m_2}{N}} \left\|{\bf S}_1^N {\widehat{\bf M}}^N \right\|_2 + 3\, \sqrt{\max (2\log m_2, \frac{p}{2})}\,\left\|{\bf S}^N_1 {\widehat{\bf M}}^N {\bf S}^{N\top}_2\right\|_{1,2}.
\]

It remains to study:
\begin{equation}
\label{eqn: *condition S1}
\begin{array}{ll}
\displaystyle\left(\mathbb{E} \| {\bf S}^N_1 {\widehat{\bf M}}^N  {\bf S}_2^{N\top}\|_2^p  \right)^{\frac{1}{p}}& \displaystyle = \left( \mathbb{E}_1 \left[ \mathbb{E}_2\left[ \|{\bf S}^N_1 {\widehat{\bf M}}^N  {\bf S}_2^{N\top} \|_2^p~\Big \vert ~{\bf S}_1\right]\right]\right)^{\frac{1}{p}} \\
& \displaystyle \leq \left(\mathbb{E}_1 \left(\sqrt{\frac{m_2}{\frac{N}{2}}} \left\|{\bf S}_1^N {\widehat{\bf M}}^N  \right\|_2 + 3\, \sqrt{\max (2\log m_2, \frac{p}{2})}\, \left\|{\bf S}^N_1 {\widehat{\bf M}}^N  \right\|^p_{1,2}\right)^p\right)^{\frac{1}{p}}\\
& \displaystyle \leq\sqrt{\frac{m_2}{\frac{N}{2}}}\underbrace{\left(\mathbb{E}_1  \left\|{\bf S}_1^N {\widehat{\bf M}}^N  \right\|_2^p \right)^{\frac{1}{p}}}_{\text{term~(I)}} + 3\, \sqrt{\max (2\log m_2, \frac{p}{2})}\, \underbrace{\left(\mathbb{E}_1  \left\|{\bf S}^N_1 {\widehat{\bf M}}^N  \right\|_{1,2}\right)^{\frac{1}{p}}}_{\text{term~(II)}}\end{array}
\end{equation}
The last line is by triangle inequality. 

\item
Next, we respectively tackle term (I) and (II) estimations. 
\begin{enumerate}
    \item[Term (I):]
    Applying the spectral norm estimation of row sampling \eqref{eqn: row sampling}, 
    \begin{equation}
    \label{eqn: *S1 I}
    \begin{array}{ll}\displaystyle \left(\mathbb{E}_1 \left\|{\bf S}^N_1 {\widehat{\bf M}}^N \right\|_2^p\right)^{\frac{1}{p}} & \displaystyle \leq \sqrt{\frac{m_1}{\frac{N}{2}}} \left\| {\widehat{\bf M}}^N \right\|_2 + 3\, \sqrt{\max \left( 2\log m_1, \frac{p}{2}\right)}\,\left\| {\widehat{\bf M}}^{N \top}\right\|_{1,2}.\end{array}
    \end{equation}
    
Given the definition of $\mu$ \eqref{eqn: coherence}, the column norm of ${\widehat{\bf M}}^N \in \R^{\frac{N}{2} \times \frac{N}{2}}$ has a uniform bound, that is
\[
\left\| {\widehat{\bf M}}^{N\top}\right\|_{1,2} \leq\sqrt{\frac{N}{2}}\, \mu\, \frac{\left\| {\bf H}^N\right\|_F}{N} = \mu\,\frac{\left\|{\bf H}^N\right\|_F}{\sqrt{2N}}.
\]
Then continue from \eqref{eqn: *S1 I}, the estimation of term (I) in \eqref{eqn: *condition S1} becomes 
\begin{equation}
\label{eqn: *term I est}
\left(\mathbb{E}_1 \left\|{\bf S}^N_1 {\widehat{\bf M}}^N \right\|_2^p\right)^{\frac{1}{p}} \leq \sqrt{\frac{m_1}{\frac{N}{2}}} \left\| {\widehat{\bf M}}^N \right\|_2 + 3\, \sqrt{\max \left( 2\log m_1, \frac{p}{2}\right)}\,\mu\, \frac{\left\| {\bf H}^N\right\|_F}{\sqrt{2N}}.
\end{equation}

\item[Term (II):]
For any random realization of ${\bf S}_1^N$, we have a uniform bound
\[
\left\|{\bf S}_1^N {\widehat{\bf M}}^N \right\|_{1,2}\leq\sqrt{m_1}\mu\,\frac{\left\| {\bf H}^N \right\|_F}{N} =\mu\,\frac{\sqrt{m_1}}{N}\left\| {\bf H}^N \right\|_F.
\]
Consequently, the $p$-th moment is within this uniform bound as well, i.e.
\begin{equation}
\label{eqn: *term II est}
\left(\mathbb{E}_1 \left\|{\bf S}_1^N {\widehat{\bf M}}^N \right\|^p_{1,2}\right)^{\frac{1}{p}} \leq \left(\left(\mu\,\frac{\sqrt{m_1}}{N}\left\| {\bf H}^N \right\|_F\right)^p\right)^{\frac{1}{p}} =\mu\,\frac{\sqrt{m_1}}{N}\left\| {\bf H}^N \right\|_F.
\end{equation}
\end{enumerate}

\end{enumerate}

Now we plug in the analysis of term (I) \eqref{eqn: *term I est} and (II) \eqref{eqn: *term II est} back into the overall estimation \eqref{eqn: *condition S1},
\begin{equation}
\label{eqn: main est line 1}
\begin{array}{ll}
\displaystyle\left(\mathbb{E} \| {\bf S}^N_1 {\widehat{\bf M}}^N  {\bf S}_2^{N\top}\|_2^p  \right)^{\frac{1}{p}} & \displaystyle\leq \frac{\sqrt{m_1m_2}}{\frac{N}{2}}\, \left\| {\widehat{\bf M}}^N \right\|_2 + 3\, \sqrt{\max \left( 2\log m_1,2\log m_2, \frac{p}{2}\right)}\,\mu\,\frac{\sqrt{m_1}+\sqrt{m_2}}{N}\,\left\| {\bf H}^N \right\|_F\\
& \displaystyle\leq\frac{m}{N}\, \left\| {\bf M}^N \right\|_2 + 6\, \sqrt{\max \left( \log m, \frac{p}{2}\right)}\,\mu\,\frac{\sqrt{m}}{N}\,\left\| {\bf H}^N \right\|_F.
\end{array}
\end{equation}
To derive the last inequality, note that $\| {\widehat{\bf M}}^N\|_2 \leq \|{\bf M}^N \|_2,$ since ${\widehat{\bf M}}^N$ is the submatrix of ${\bf M}^N$. 
We further replace $m_1, m_2$ in the last inequality using the relation that $m_1+m_2 =m, m_1, m_2 \in \mathbb{N}^+$ which implies
\[
\max(\log m_1, \log m_2) < \log m, \quad 2\sqrt{m_1m_2} \leq m, \quad \sqrt{m_1} + \sqrt{m_2} \leq \sqrt{2m}.
\]

Moreover, due to the relation of ${\bf M}^N$ and ${\bf H}^N:$ 
\[
 {\bf H}^N - {\bf M}^N = \diag\left({\bf H}^N\right), \]
 and the fact that the difference $\diag\left({\bf H}^N\right)$ is positive semi-definite, by Weyl's monotonicity principle \cite{W12, B01}, 
 \[
\left\|{\bf M}^N\right\|_2 =\sigma_1 \left({\bf M}^N \right) \leq \sigma_1 \left({\bf H}^N\right)= \left\|{\bf H}^N\right\|_2. 
\]
We replace the norms of ${\bf M}^N$ in \eqref{eqn: main est line 1} with norm of ${\bf H}^N$ and obtain the main result \eqref{eqn: moment bound}:
\[
\left(\mathbb{E} \| {\bf S}^N_1 {\widehat{\bf M}}^N{\bf S}_2^{N\top}\|_2^p  \right)^{\frac{1}{p}} \leq \frac{m}{N}\, \left\| {\bf H}^N\right\|_2 + 6\, \sqrt{\max \left( \log m, \frac{p}{2}\right)}\,\mu\,\frac{\sqrt{m}}{N}\,\left\| {\bf H}^N\right\|_F.
\]
To obtain the ultimate moment bound, we trace back to the decoupling \eqref{eqn: decoupling}. Multiplying the above estimation with $2,$ we conclude the calculation \eqref{eqn: moment bound}. 
\[
\left(\mathbb{E} \left\| {\bf S}^N {\bf M}^N {\bf S}^{N\top}\right\|_2^p\right)^{\frac{1}{p}} \leq  \frac{2m}{N}\, \left\| {\bf H}^N\right\|_2 + 12\, \sqrt{\max \left( \log m, \frac{p}{2}\right)}\,\mu\,\frac{\sqrt{m}}{N}\,\left\| {\bf H}^N\right\|_F.\]
\end{proof}
\section{Numerical experiments}
\label{sec:numerics}
In this section, we present numerical evidence that corroborate our theory. According to \cref{thm: well-conditioned hess}, we have stated that, from \eqref{eqn: main probability}:
\begin{equation*}
\mathbb{P}\left(\cond\left({\bf H}^N_s\right) \leq \frac{L + \tau(m)}{\ell- \tau(m)} \right) \geq 1 - \frac{1}{r}.
\end{equation*}
The theory implies that with high probability, the condition number of the sketched Hessian is well within control. The success probability is better if $r$ is big (large number of measurements), and the condition number upper bound is smaller if $m$ is small (small number of to-be-reconstructed parameters). We validate these observations in our numerical simulation, and focus our attention on the interplay between the data rank $r$ and the sampling size $m$.

The following two subsections present numerical results run on two test cases: synthetic tests, and PDE-based simulation for an elliptic inverse problem. To obtain probabilistic results, $10,000$ independent runs are applied in each test.

\subsection{Synthetic tests}

We start with the synthetic experiment.
The full Hessian matrix ${\bf H}^N = \pmb{\Phi}^N \pmb{\Phi}^{N\top} \in \R^{N \times N}$ \eqref{eqn: hess N} is built via the rank-$r$ matrix $\pmb{\Phi}^N \in \R^{N \times r}$. The entries of $\pmb{\Phi}^N$ are generated by synthetic data, from various of choices of probability distributions shown below. We study the conditioning of the Hessian submatrix ${\bf H}^N_s = {\bf S}^N \pmb{\Phi}^N\pmb{\Phi}^{N\top}{\bf S}^{N\top} \in \R^{m \times m}$ \eqref{eqn: sub hess} after the uniform sampling applied on ${\bf S}^N\pmb{\Phi}^N$ \eqref{eqn: sub phi}. 

\noindent\textbf{Fixed $r$ with varying $m$.} In the first set of experiments, we work with a fixed value of the full rank $r$ (data points), and vary the sampling dimension $m.$ This testing scenario allows us to fix the probability bound on the right hand side of~\eqref{eqn: main probability} and check the cut-off value of condition number. The synthetic data \eqref{eqn: hess N} is built in three types. Each entry of the factor matrix $\pmb{\Phi}^N \in \R^{N \times r}$ is drawn from, respectively, standard Gaussian $\mathcal{N}(0,1)$, uniform $[0,1]$ and Bernoulli $\{0, 1\}$ distributions. $10,000$ independent subsamplings have been run for various values of $m$, and in \cref{fig: synthetic fix r}, we plot the percentage range $[20\%, 80\%]$ and the median value of the condition number of the sketched Hessian sub-matrices, for all three types of synthetic data matrix. The two subplots show results for $r=50$ and $r=100$ respectively. 

\begin{figure}[!htp]
    \centering
\subfloat[$r = 50$]{\includegraphics[width=8.5cm]{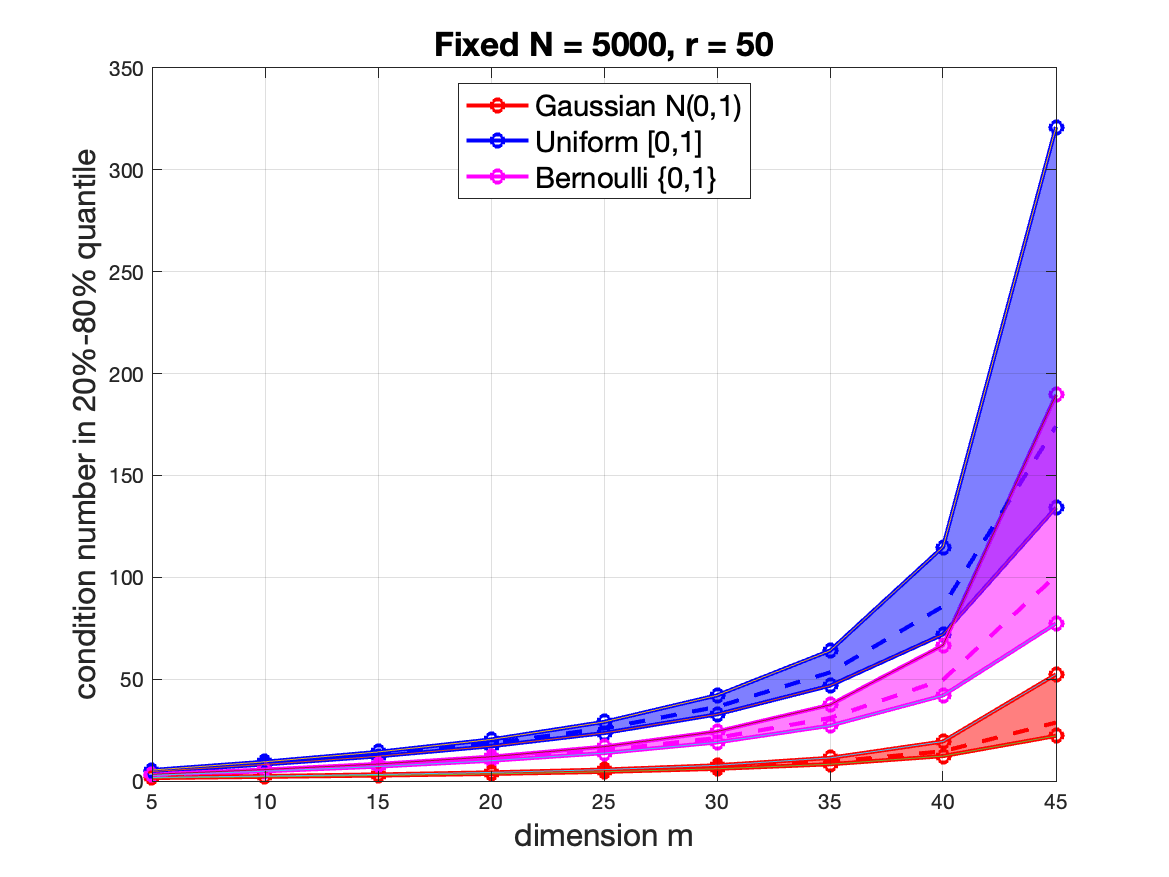}}~~
\subfloat[$r = 100$]{\includegraphics[width=8.5cm]{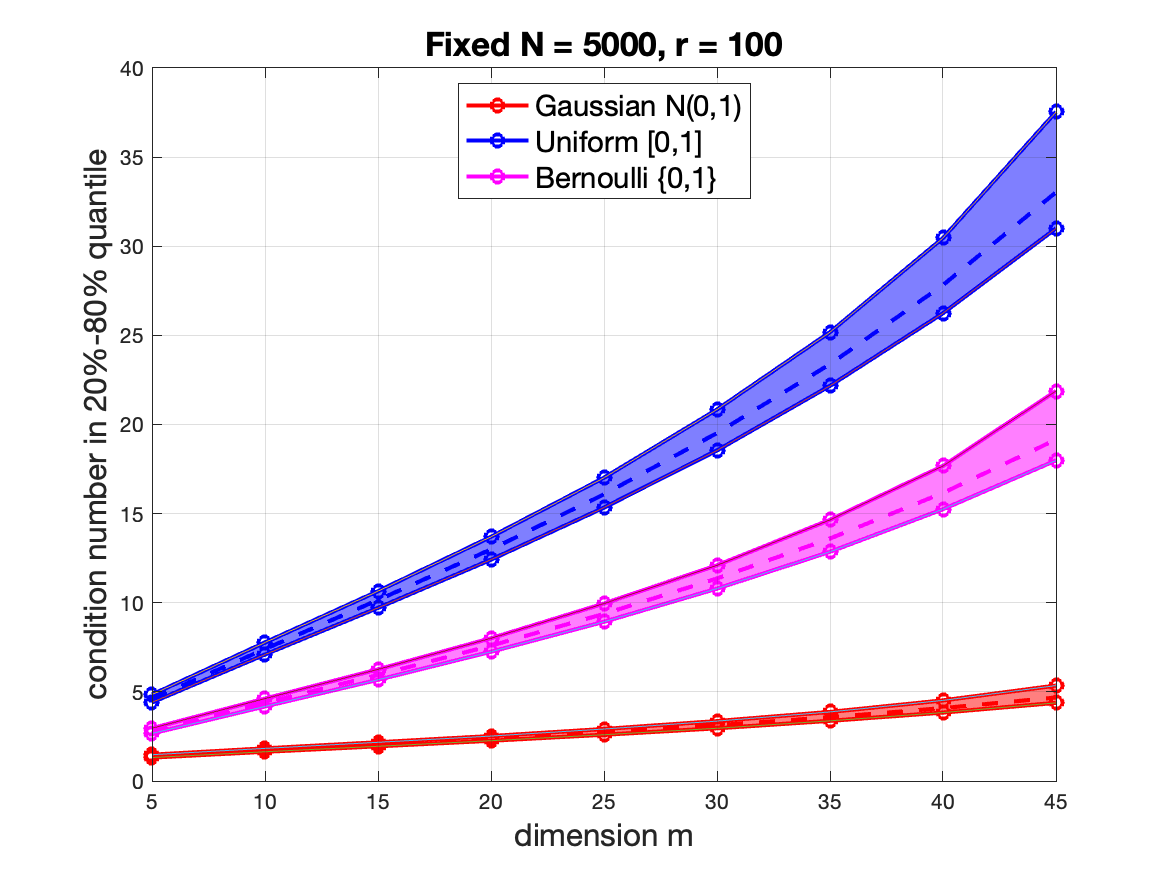}}
\caption{The concentration of condition number $\cond ({\bf H}^N_s).$}
\label{fig: synthetic fix r}
\end{figure}

{\begin{center}
\footnotesize
\begin{tabular}{cccc} 
\toprule
\vspace{1mm}
 & Gaussian  & Uniform  & Bernoulli\\
 \vspace{1mm}
 $\frac{L}{\ell}$ &4.1076&2.4819&3.0833\\
$\frac{\|{\bf H}^N \|_2}{\Tr({\bf H}^N)}$ &\color{blue} \bf 0.0241& \color{red} \bf 0.7547 & 0.5095\\
$\frac{\|{\bf H}^N \|_F}{\Tr({\bf H}^N)}$ &\color{blue} \bf 0.1422 &\color{red} \bf 0.7556 & 0.5143
\\
\toprule
\end{tabular}
~~~~~~
\begin{tabular}{cccc} 
\toprule
\vspace{1mm}
 & Gaussian  & Uniform  & Bernoulli\\
 \vspace{1mm}
 $\frac{L}{\ell}$ & 2.8263 & 2.0104 & 2.3000\\
 $\frac{\|{\bf H}^N \|_2}{\Tr({\bf H}^N)}$ &\color{blue} \bf 0.0130&\color{red} \bf 0.7521& 0.5043\\
$\frac{\|{\bf H}^N \|_F}{\Tr({\bf H}^N)}$ &\color{blue} \bf 0.1010&\color{red} \bf 0.7525&0.5068
\\
\toprule
\end{tabular}
\captionof{table}{Key parameter values in \eqref{eqn: main probability}}
\label{table: key parameters1}
\end{center}}

From \cref{fig: synthetic fix r}, we observe that for all three types of data matrices, the condition number of the sketched sub-matrices grows, statistically, as the sampling size $m$ increases. The concentration variance also expands as $m$ gets bigger. Moreover, sampling the standard Gaussian data produces the best conditioning in comparison to the other two data types.

The test suggests the uniform $[0,1]$ gives the least satisfying condition number, and we now proceed to examine the reason. To do so, we calculate the main parameters in \eqref{eqn: main probability}. See the list in \cref{table: key parameters1}. While the ratio $\frac{L}{\ell}$ for the three cases are all similar, though the distortion term $\tau$ varies greatly. In particular, according to~\eqref{eqn: distortion}, $\tau$ depends on $\frac{\|{\bf H}^N \|_2}{\Tr({\bf H}^N)},\frac{\|{\bf H}^N \|_F}{\Tr({\bf H}^N)}$, and it is clear that the uniform $[0,1]$ distribution produces much larger values for these factors than the Gaussian case.

\noindent\textbf{Fixed $m$ with varying $r$.} We then carry out another set of experiments, with the sample size $m$ fixed and we are free to adjust $r$ (conditioned that $r>m$). By \eqref{eqn: main probability}, the probability of the well-conditioned Hessian is expected to decrease as we increase the rank $r$. To demonstrate this, we display the frequency histogram for the rank of ${\bf H}^N_s \in \R^{m \times m}$ in \cref{fig: synthetic fix m}. As the full discrete Hessian ${\bf H}^N \in \R^{N \times N}$ \eqref{eqn:hessian_N} rank $r$ rises, we expect the sampled Hessian ${\bf H}^N_s$ \eqref{eqn: sub hess} rank to concentrate towards the size of $m,$ its largest possible rank. For this test, we only generate synthetic data using the Gaussian distribution. In \cref{fig: synthetic fix m}, we preset a threshold to compute the rank of the subsampled matrix ${\bf H}^N_s$, and the histogram demonstrates the distribution of the rank generated from $10,000$ runs using different $r$ values.

\noindent
\begin{minipage}{0.99\textwidth}
\centering
\captionsetup{type=figure}\addtocounter{figure}{-1}\subcaptionbox{Rank threshold $10^{-6}$. The computed rank is the number of eigenvalues that are above this threshold factored by the top eigenvalue $\lambda_1$.} 
[\linewidth]{\includegraphics[width=16cm]{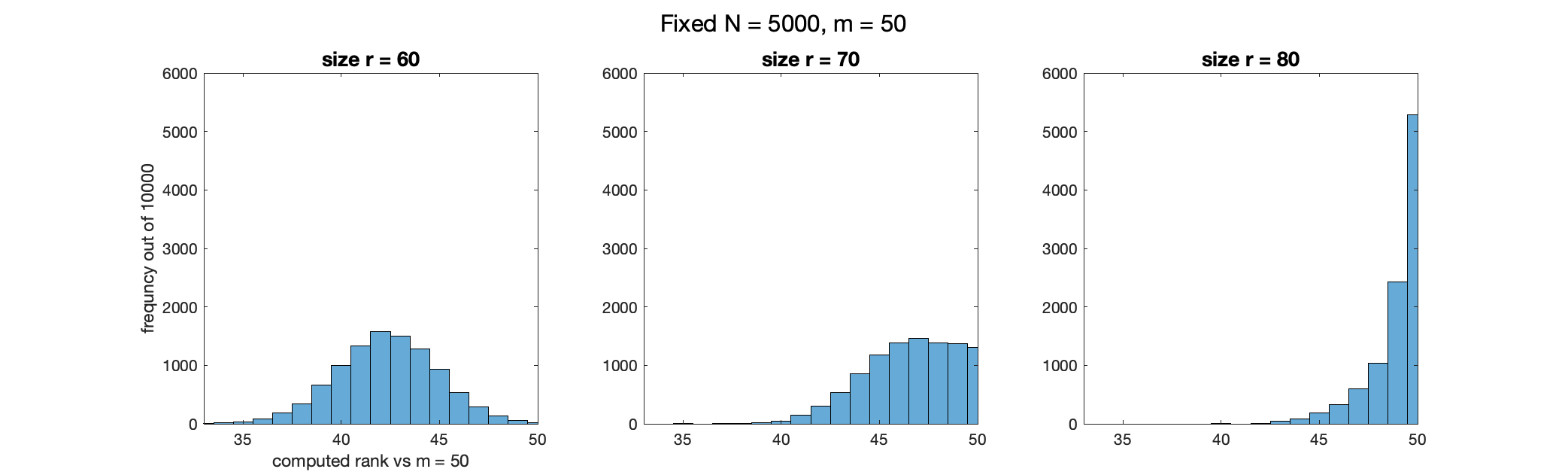}}\\
\subcaptionbox{Rank threshold $10^{-2}.$ The rank counting way is the same as in (a).}
[\linewidth]{\includegraphics[width=16cm]{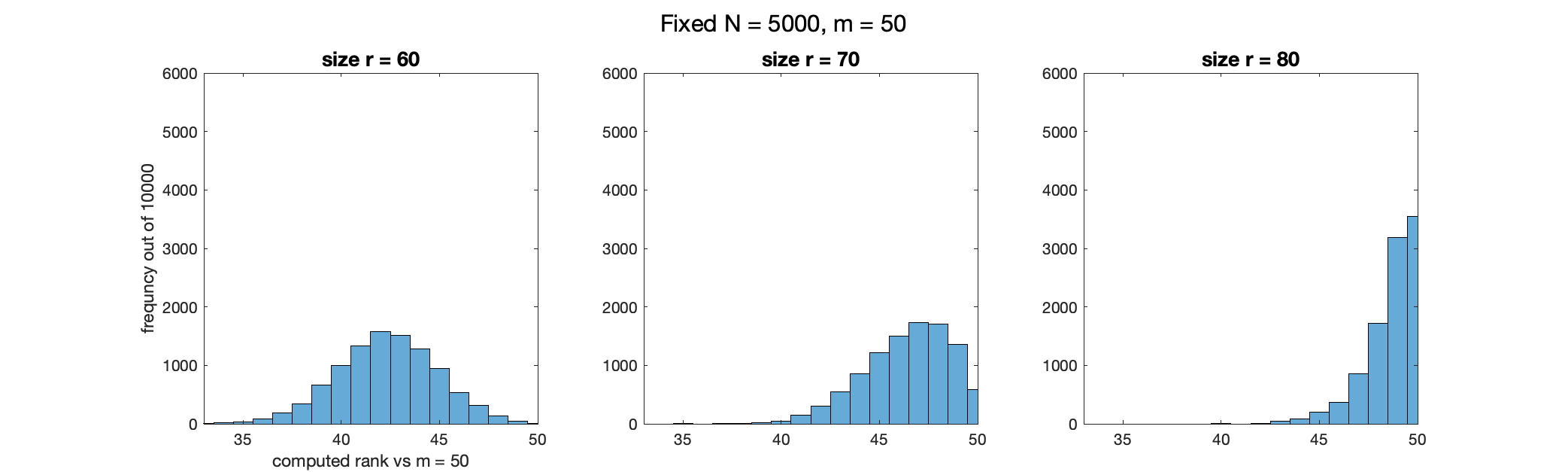}}    
\captionof{figure}{The rank distribution of Hessian submatrix ${\bf H}^N_s$. }
\label{fig: synthetic fix m}

\vspace{24pt}
\small
\hspace*{-0.5cm}
\begin{tabular}{ccccc} 
\toprule
\vspace{1mm}
 & $r = 60 $  & $r = 70$  & $r = 80$ & \\
 \vspace{1mm}
 $\frac{L}{\ell}$ &6.0995 &5.6329 &5.2802 & $\downarrow$\\
$\frac{\|{\bf H}^N \|_2}{\Tr({\bf H}^N)}$ &0.0597
& 0.0555&0.0523& $\downarrow$
\\
$\frac{\|{\bf H}^N \|_F}{\Tr({\bf H}^N)}$ &0.1707
&0.1636&0.1582 & $\downarrow$\\
\toprule
\end{tabular}
\captionof{table}{\small Key parameters in \eqref{eqn: main probability} (averaged over 10000 simulations).}
\label{table: key parameters2}
\end{minipage}
\vspace{24pt}

\cref{fig: synthetic fix m} shows that when $m$ is fixed at $50$, as the original rank $r$ increases, the distribution of the rank of the subsampled Hessian matrix also concentrates to $m$, the largest possible rank. This validates the statement in \cref{thm: well-conditioned hess} that as $r$ increases, one has higher probability of obtaining well-conditioned sub-matrices. The two rows in \cref{fig: synthetic fix m} showcases the cut-offs with different threshold. The top panel sets a more lenient rank condition and hence provides greater rank probability for the Hessian. Other key parameters in \eqref{eqn: main probability} are also provided in \cref{table: key parameters2}. Their values drop according to the increasing $r,$ which may lower the condition number threshold $\frac{L+\tau}{\ell-\tau}$ \eqref{eqn: main probability}-\eqref{eqn: distortion} as well.

\subsection{Elliptic inverse problem}
We now consider an inverse problem associated with elliptic type equations. This is similar to the classical EIT (electrical impedance tomography) problem from medical imaging but enjoys a better stability. This is because we use interior sensors to avoid the ill-conditioning issue commonly seen in the EIT problem \cite{AV05}. Consider the elliptic equation 
 \[
 \left\{\begin{array}{rl}
\nabla_x\cdot(\sigma\nabla_x u)&=S,\quad x\in\Omega\\
u&=0,\quad x\in\partial\Omega.\end{array}\right.\]
 The task is to provide sources to the elliptic equation and take measurement of the solution inside the domain, to reconstruct the unknown media function $\sigma^*\in L^2(\Omega)~(\Omega \subset \R^2)$, the elliptic coefficient, using finite measurements. We choose the ground-truth $\sigma^*$ to be the Shepp-Logan phantom \cite{SL74}. We define the loss function \eqref{eqn:opt_dis} to be the mismatch between the PDE-generated data and the true data. Suppose we fix the measurement index $j = (j_1, j_2)$. In particular, given the source $S_{j_1}$ to the PDE, we compute the PDE solution, and test the solution by the testing function $S_{j_2}$. In this setting, the following $u_{j_1}$ and $h_{j_2}$ solve the forward and adjoint models respectively, and are used for the construction of the loss function derivative:
\begin{equation}
\label{eqn: EIT}
\left\{\begin{array}{rl}
\nabla_x\cdot(\sigma\nabla_x u_{j_1})&=S_{j_1},\quad x\in\Omega\\
u_{j_1}&=0,\quad x\in\partial\Omega,
\end{array}\right.\quad \left\{
\begin{array}{rl}
\nabla_x \cdot(\sigma\nabla_x h_{j_2})&=S_{j_2},\quad x\in\Omega\\
h_{j_2}&=0,\quad x\in\partial\Omega.
\end{array}\right.
    \end{equation}
    The forward map $\mathcal{M}_j: L^2(\Omega) \to L^2(\Omega)$ is given by $\mathcal{M}_j[\sigma]\equiv d_{j_1,j_2}.$

\begin{figure}[!htp]
    \centering    \includegraphics[width=15cm]{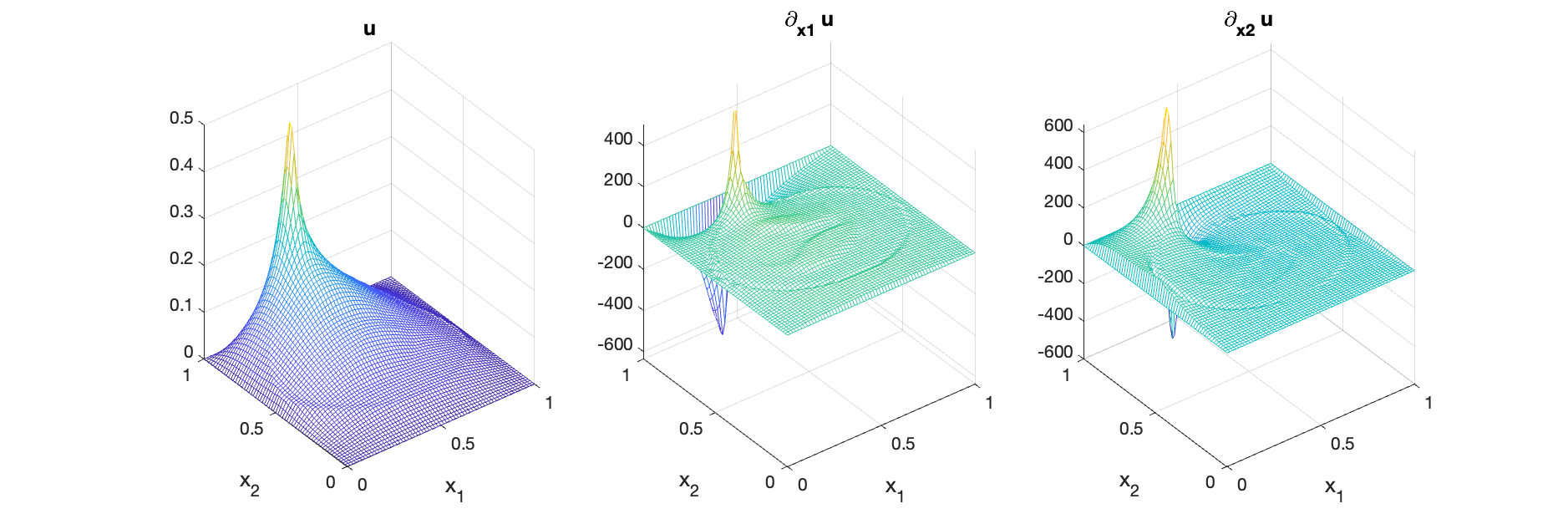}
    \caption{One solution to the forward model \eqref{eqn: EIT} at certain source location.}
\end{figure}

For the implementation, the window functions are chosen to be Gaussian functions as mollified versions of Dirac deltas:
\[
S_j(x) = e^{-0.1 \left(x- y_j\right)^2 },\, \text{for}~j = 1, \dots N.
\]
Moreover, the orthonormal basis $\{\phi_i\}_{i=1}^r$ is used to estimate the media function $\sigma.$ With all these preparations, we recall the notation defined in~\eqref{eqn: hess N} for the following Hessian matrix: 
\begin{equation}
\label{eqn: Phi EIT}
\begin{array}{ll}
{\bf H}^N = \pmb{\Phi}^N \pmb{\Phi}^{N\top}\,,\quad \text{where}~\pmb{\Phi}_{i,j} &\displaystyle = \langle \phi_i, \frac{\delta \mathcal{M}_j}{\delta \sigma} \rangle,\\& \displaystyle = \int_\Omega \partial_x \phi_i \, \nabla_x u_{j_1} \cdot \nabla_x h_{j_2} \dd x.
\end{array}
\end{equation}
Noting that the Fr\'{e}chet derivative takes the form of
$\nabla_x u \cdot\nabla_x h$, we then have 
\[
\pmb{\Phi}_{x,j} = \nabla_x u_{j_1}(x) \cdot \nabla_x h_{j_2}(x).\]

Numerically we set $\Omega =[0,1]^2 \subset \R^2$, and the mesh size is set to be $\Delta x= \frac{1}{64}$ in both dimensions. So the total number of grid points is $N = (64+1)^2 = 4225$. As for the measurements, we consider placing sources and detectors in two types of subdomains $\mathcal{D}_1,\mathcal{D}_2 \subset \Omega$, see \cref{fig: mesurement configuration}: 
\begin{enumerate}
    \item $ \mathcal{D}_1 = [\frac{1}{8},\frac{7}{8}]^2$. In this domain we have $r_1 = 384$ grid points for sample sources and detectors.
    \item $ \mathcal{D}_2 = [\frac{1}{32},\frac{31}{32}]^2 \setminus [\frac{3}{16},\frac{13}{16}]^2$. In this domain, we have $r_2 = 334$ grid points for sample sources and detectors.
    \end{enumerate}
To select data, we randomly pick $\frac{1}{6}$ portion of the nodes in these subdomains $\mathcal{D}$ to serve as sources, and we place detectors within $\pm 5$ spatial units from the source points. The generated discrete data matrices $\pmb{\Phi}^N, {\bf H}^N$ are presented in \cref{fig: data}. 

\begin{figure}[!htp]
    \centering
\includegraphics[width=15cm]{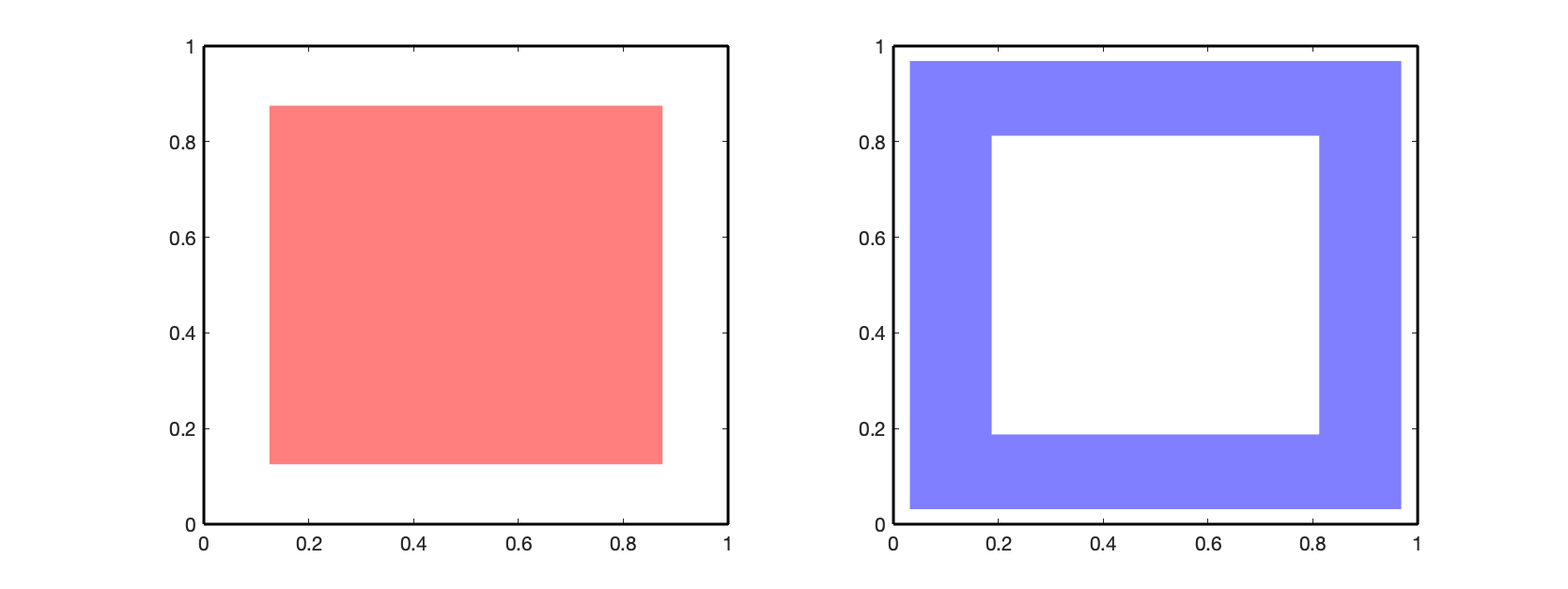}
\caption{Measurement layouts. The two layouts present sub-domains $\mathcal{D}_1$ and $\mathcal{D}_2.$}
\label{fig: mesurement configuration}
\end{figure}

\begin{figure}[!htp]
    \centering
\subfloat[Data $\pmb{\Phi}^N_{1}, {\bf H}^N_1$ with measurements from $\mathcal{D}_1$ ]{\includegraphics[width=9cm]{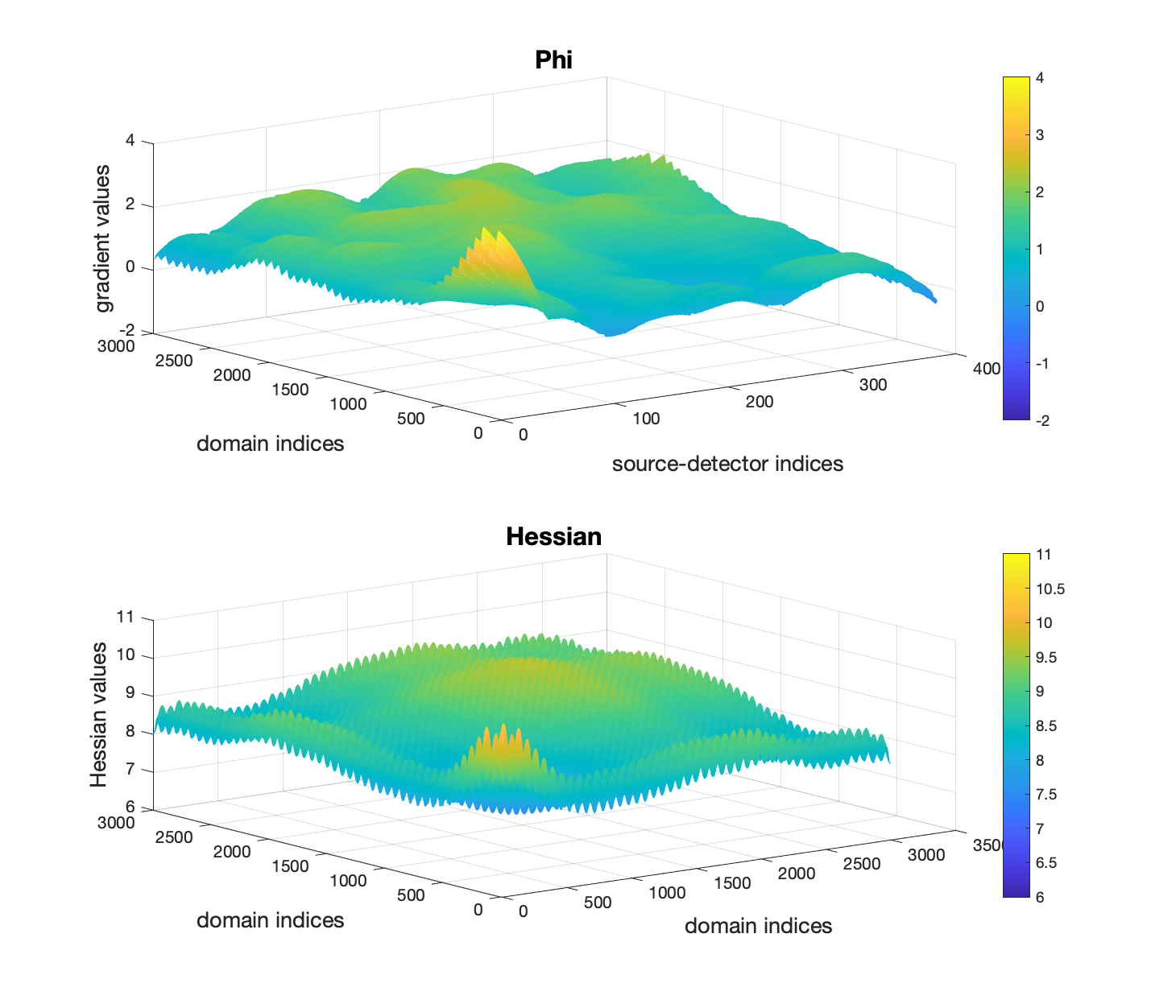}}
\subfloat[Data $\pmb{\Phi}^N_{2}, {\bf H}^N_2$ with measurements from $\mathcal{D}_2$]{\includegraphics[width=9cm]{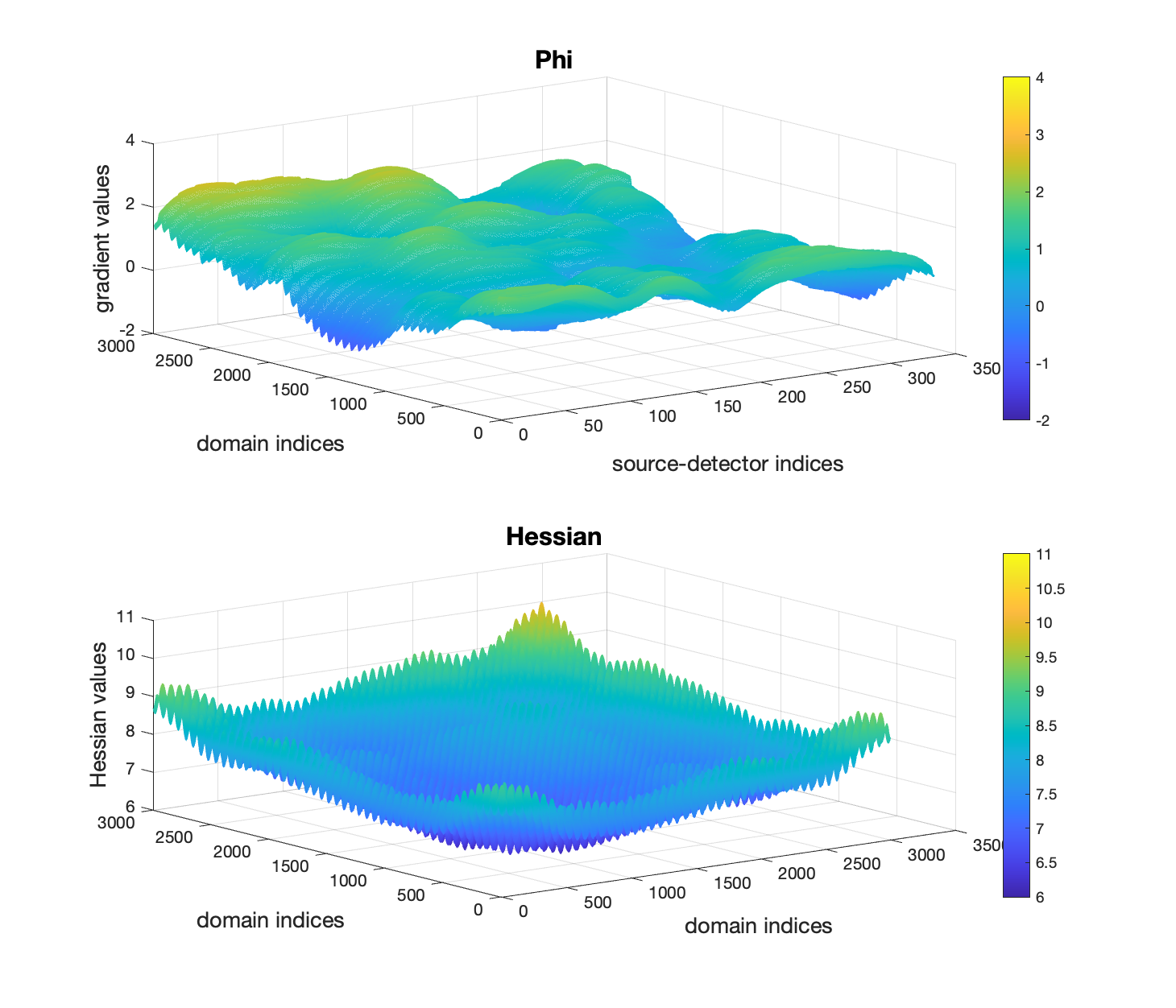}}
\caption{The entry magnitude of matrices $\pmb{\Phi}^N$ (top) and ${\bf H}^N$ (bottom). The left and right panels are results for $\mathcal{D}_1$ and $\mathcal{D}_2$ respectively. The magnitudes are showed in logarithm values, where the data matrices are first processed by Gaussian filters with std 20.}
\label{fig: data}
\end{figure}

We perform the random sketching on these two  established data matrices. In \cref{fig: eit condition1} and \cref{fig: eit condition2} we present the condition number of the subsampled matrices ${\bf H}_s^N \in \R^{m \times m}$ \eqref{eqn: sub hess} as a function of $m$. In each scenario, $10,000$ runs are performed and we document the $20\%-80\%$ quantile. As $m$ increases, the condition number clearly increases as well. The dotted line in the plots are calculated using $\frac{L}{\ell}$ \eqref{eqn: main probability} as a threshold. The two tables below the plots count the probability of the condition values exceeding this threshold from all simulations. The failure probability gradually increases as $m$ increases.

\begin{minipage}{0.49\textwidth}
\centering
\includegraphics[width=8cm]{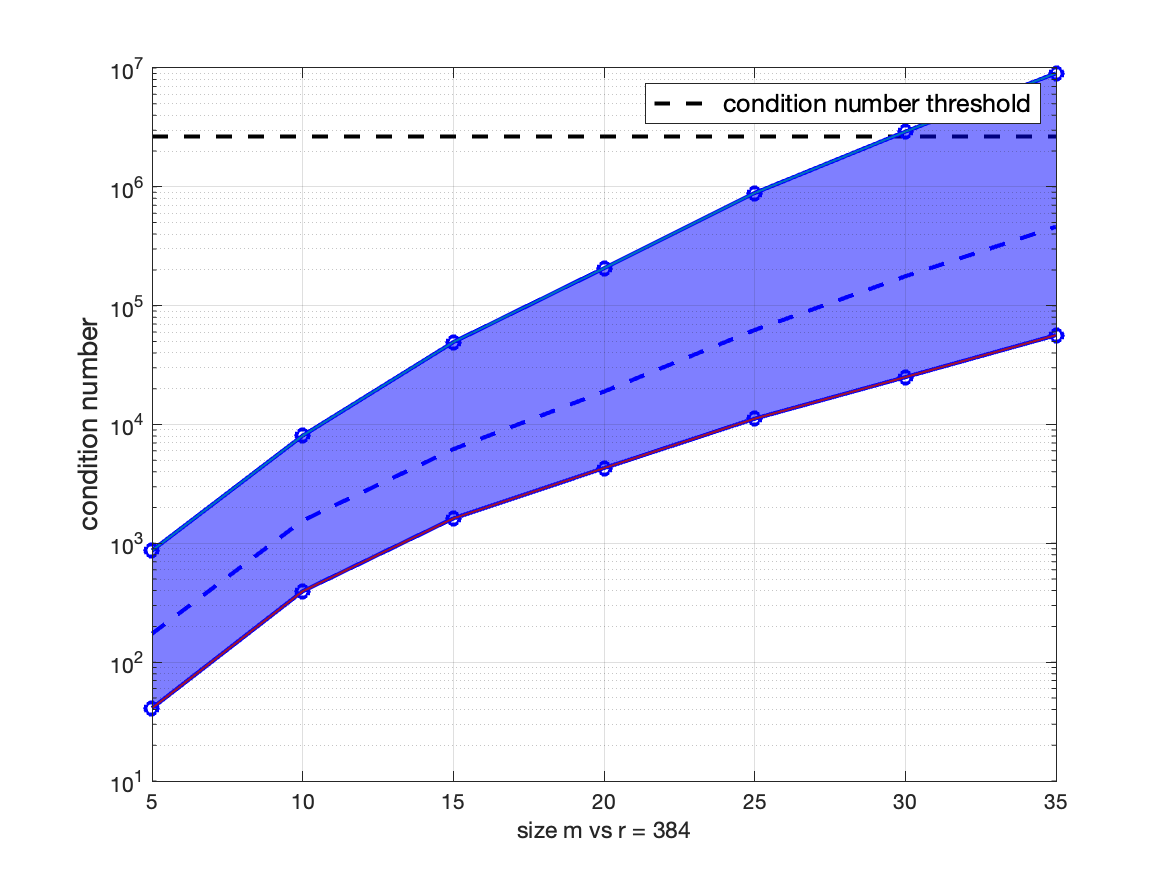}
\captionof{figure}{\small{Conditioning concentration of subsampled ${\bf H}^N_1$}}
\label{fig: eit condition1}
\vspace{24pt}
\scalebox{0.6}{
\centering
\begin{tabular}{cccccccc} 
\toprule
size $m$ (vs $r = 384$) & 5 & 10 & 15 & 20 & 25 & 30 & 35 \\
failure probability (\%)  &0.13 & 0.95&3.28&7.05&13.39&20.69&29.76\\ 
\toprule
\end{tabular}}
\captionof{table}{\small{LHS probability in \eqref{eqn: main probability} of ${\bf H}^N_1$}}
\label{table: failure 1}
\end{minipage}
\begin{minipage}{0.02\textwidth}
\textcolor{white}{bbbbbbbb}
\end{minipage}
\begin{minipage}{0.49\textwidth}
\centering
\includegraphics[width=8cm]{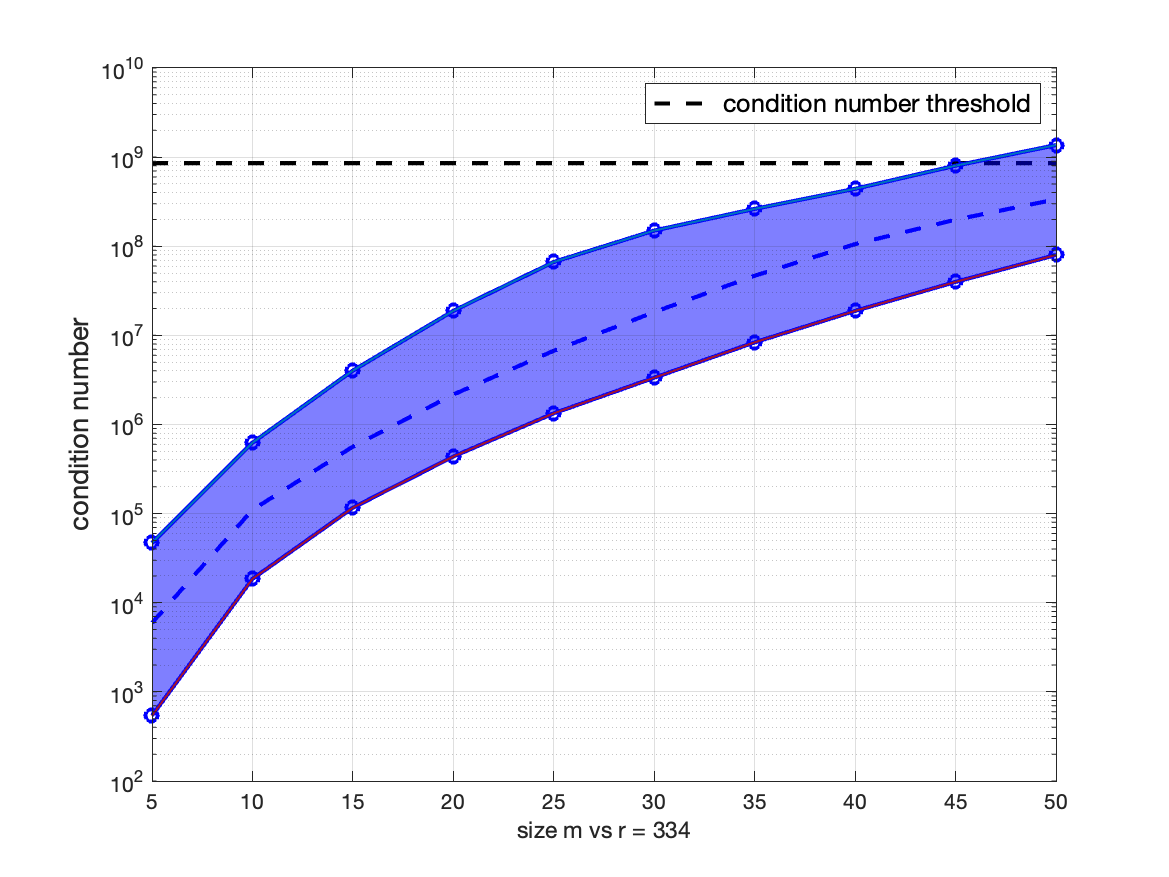}
\captionof{figure}{\small{Conditioning concentration of subsampled ${\bf H}^N_2$}}
\label{fig: eit condition2}
\vspace{24pt}
\scalebox{0.6}{
\centering
\begin{tabular}{cccccccccc} 
\toprule
size $m$ (vs $r = 334$) &10&15& 20 & 25 & 30 & 35 & 40 & 45 & 50 \\
failure probability (\%)  & 0.13 &0.2 &0.47 & 1.5&3.27&6.00&11.13&18.83&27.93\\ 
\toprule
\end{tabular}}
\captionof{table}{\small{LHS probability in \eqref{eqn: main probability} of ${\bf H}^N_2$}}
\label{table: failure 2}
\end{minipage}
\vspace{12pt}

These findings validate the main \cref{thm: well-conditioned hess}.
\section{Conclusions}
In a numerical inverse problem, we are usually only presented with a finite amount of data. It is unrealistic to expect that the limited information from the data can be used to fully reconstruct high dimensional parameters in large-scale physical systems, and thus one has to narrow down the goal to inferring a few subsampled parameters. In contrast to the unique reconstruction theory used to infer parameters in the infinite-dimensional setting, this paper studies the unique reconstruction in the finite dimensional setting, and spells out the number of reconstructable parameters ($m$) when the data given is fixed ($r$). In particular, we deploy random sketching techniques to examine the conditioning of the Hessian matrix that is $m\times m$ with rank $r$. When the ``success" is quantified by the Hessian's condition number, we establish the success probability. The theory is validated for cases of both synthetic data and PDE simulation data from an elliptic inverse problem. The numerical results confirm the theoretical findings: As long as the number of samples $m$ is within a constant factor of the full Hessian rank $r$, the sampled Hessian is well-conditioned with high probability. 

There are many related research directions that we would like to explore in the future. One such direction is to integrate the greedy approach, and to use the selected rows to eliminate the rows of similar information getting repeated. Furthermore, in our entire derivation, we have assumed the size of the Hessian is finite. We have noticed that the main \cref{thm: well-conditioned hess} has no explicit dependence on the ambient dimension $N$ but only on the rank and sampling size, $r$ and $m$. This is a strong indication that the result can be pushed to justify the scenario where $N=\infty$, in which case the Hessian is a bilinear functional, mapping $L^2 \times L^2$ to a real number. This would allow us to fully come back to the continuous PDE setting. Our analysis strongly depends on existing theory developed in~\cite{tropp2012user}. We believe with refined probabilistic arguments, the error bound can be tightened further as well.

\section*{Acknowledgements}
R.J. and Q.L. acknowledge the support from NSF-DMS-2308440. The research of S.S. is partially supported by NSF-DMS–2324368 and ONR grant N00014-21-1-2119. R.J. is partially supported by NSF-DMS-2023239. 
\appendix
\section{Positive-definiteness of Hessian} 
\label{hess positivity}
In this section, we give the proof details to show that the Hessian operator $\mathcal{H} \loss[\cdot]$ is strictly positive-definite in the neighbourhood of $\sigma_*$, i.e. \eqref{eqn:hessian_infinite}. Consequently $\sigma_*$ is a local unique optimum to the loss function \eqref{eqn:opt_cont}. 
\begin{proposition}
Assuming that 1. the Hessian operator $\mathcal{H}\loss [\cdot]$ is continuous in the neighborhood of $\sigma_*$; 2. the function set $\{\frac{\delta \mathcal{M}[\sigma_*](x)}{\delta \sigma}: \Omega \to \R \vert x \in \mathcal{D}\}$ spans the entire $L^2(\Omega).$ Then $\mathcal{H}\loss[\cdot]$ is strictly positive-definite at $\sigma_*$ and in the neighbourhood of $\sigma_*.$
\end{proposition}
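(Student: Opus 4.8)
The plan is to reduce the second variation of $\loss$ at $\sigma_*$ to its Gauss--Newton part, exploit the zero-residual condition $\mathcal{M}[\sigma_*]=d$, and then convert the spanning hypothesis into injectivity of the linearized forward map. Writing $R[\sigma]=\mathcal{M}[\sigma]-d$ so that $\loss[\sigma]=\tfrac12\|R[\sigma]\|_{L^2(\mathcal{D})}^2$, I would first Taylor-expand $\mathcal{M}[\sigma_*+\eta]=\mathcal{M}[\sigma_*]+D\mathcal{M}[\sigma_*]\eta+O(\|\eta\|^2)$ and substitute $\eta=\delta\widetilde\sigma_1,\ \delta\widetilde\sigma_2,\ \delta(\widetilde\sigma_1+\widetilde\sigma_2)$ into the four-term difference quotient defining the bilinear Hessian. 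Since $R[\sigma_*]=0$, the loss carries no linear term and $\loss[\sigma_*+\eta]=\tfrac12\|D\mathcal{M}[\sigma_*]\eta\|^2+O(\|\eta\|^3)$; the quadratic pieces then assemble, via the polarization pattern $\|a+b\|^2-\|a\|^2-\|b\|^2=2\langle a,b\rangle$, into a single cross term, giving in the limit
\[
\mathcal{H}\loss[\sigma_*](\widetilde\sigma_1,\widetilde\sigma_2)=\big\langle D\mathcal{M}[\sigma_*]\widetilde\sigma_1,\ D\mathcal{M}[\sigma_*]\widetilde\sigma_2\big\rangle_{L^2(\mathcal{D})}.
\]
The crucial simplification making the curvature term drop out is that the contribution $\langle R[\sigma_*],D^2\mathcal{M}[\sigma_*](\cdot,\cdot)\rangle$ is annihilated by $R[\sigma_*]=0$; this is precisely the Gauss--Newton reduction already exploited in the discrete computation \eqref{eqn: hess derivation}.

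Next I would specialize to $\widetilde\sigma_1=\widetilde\sigma_2=\widetilde\sigma$, so the Hessian is the manifestly nonnegative quadratic form $\mathcal{H}\loss[\sigma_*](\widetilde\sigma,\widetilde\sigma)=\|D\mathcal{M}[\sigma_*]\widetilde\sigma\|_{L^2(\mathcal{D})}^2\ge0$, and establish strictness by proving $D\mathcal{M}[\sigma_*]$ is injective. Using the representation $(D\mathcal{M}[\sigma_*]\widetilde\sigma)(x)=\langle \tfrac{\delta\mathcal{M}[\sigma_*](x)}{\delta\sigma},\widetilde\sigma\rangle_{L^2(\Omega)}$ for $x\in\mathcal{D}$, vanishing of the quadratic form forces $\langle \tfrac{\delta\mathcal{M}[\sigma_*](x)}{\delta\sigma},\widetilde\sigma\rangle=0$ for (a.e.)\ $x\in\mathcal{D}$, i.e.\ $\widetilde\sigma$ is orthogonal to every element of the family $\{\tfrac{\delta\mathcal{M}[\sigma_*](x)}{\delta\sigma}:x\in\mathcal{D}\}$. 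Hypothesis~(2), that this family densely spans $L^2(\Omega)$, then forces $\widetilde\sigma\perp L^2(\Omega)$, hence $\widetilde\sigma=0$. This yields $\mathcal{H}\loss[\sigma_*](\widetilde\sigma,\widetilde\sigma)>0$ for every $\widetilde\sigma\ne0$, the strict positive-definiteness at $\sigma_*$.

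Finally, to pass to a neighborhood I would invoke hypothesis~(1): the map $\sigma\mapsto\mathcal{H}\loss[\sigma]$ is continuous, and away from $\sigma_*$ its two pieces are the Gauss--Newton term $\|D\mathcal{M}[\sigma]\widetilde\sigma\|^2$ plus the residual-curvature term $\langle R[\sigma],D^2\mathcal{M}[\sigma](\widetilde\sigma,\widetilde\sigma)\rangle$, the latter continuous in $\sigma$ and vanishing at $\sigma_*$ because $R[\sigma_*]=0$. For a fixed direction $\widetilde\sigma$, continuity immediately preserves positivity in a (direction-dependent) neighborhood. I expect the genuine obstacle to be making this \emph{uniform} over all directions simultaneously: in infinite dimensions strict positive-definiteness ($Q(\widetilde\sigma)>0$ for all $\widetilde\sigma\ne0$) is strictly weaker than coercivity ($Q(\widetilde\sigma)\ge c\|\widetilde\sigma\|^2$), and only coercivity is stable under the $O(\|R[\sigma]\|)$ perturbation incurred by moving $\sigma$ off $\sigma_*$. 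For the elliptic model here $D\mathcal{M}[\sigma_*]$ is compact, so $\mathcal{H}\loss[\sigma_*]$ has spectrum accumulating at $0$ and is \emph{not} coercive; the honest resolution is either to read ``positive-definite in the neighborhood'' in the pointwise, direction-by-direction sense that continuity directly supplies, or to strengthen~(2) to a quantitative stability estimate $\|D\mathcal{M}[\sigma_*]\widetilde\sigma\|_{L^2(\mathcal{D})}\ge c\|\widetilde\sigma\|_{L^2(\Omega)}$ on the relevant (e.g.\ finite-dimensional) subspace, which then transfers coercively to a uniform neighborhood.
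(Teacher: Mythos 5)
Your proposal follows essentially the same route as the paper's proof for the core claim: the zero-residual condition kills the curvature term (the Gauss--Newton reduction), leaving $\mathcal{H}\loss[\sigma_*](\widetilde\sigma,\widetilde\sigma)=2\int_{\mathcal{D}}\bigl\langle \tfrac{\delta\mathcal{M}[\sigma_*](x)}{\delta\sigma},\widetilde\sigma\bigr\rangle_{\Omega}^{2}\,\dd x\ge 0$, and vanishing of this form makes $\widetilde\sigma$ orthogonal to the family $\{\tfrac{\delta\mathcal{M}[\sigma_*](x)}{\delta\sigma}: x\in\mathcal{D}\}$, hence zero by the spanning hypothesis. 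The paper words this as a contradiction, you word it as injectivity of $D\mathcal{M}[\sigma_*]$, but it is the same argument.

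The one place you genuinely depart from the paper is the neighborhood step, and your skepticism there is warranted. The paper disposes of this step in a single sentence: continuity of $\mathcal{H}\loss[\cdot]$ near $\sigma_*$ is claimed to transfer strict positive-definiteness from $\sigma_*$ to a neighborhood. As you point out, that inference is not valid in infinite dimensions as stated: strict positivity without coercivity is not an open condition, because a quadratic form whose spectrum accumulates at zero --- exactly the situation when $D\mathcal{M}[\sigma_*]$ is compact, as in the paper's elliptic example --- can lose positivity under arbitrarily small symmetric perturbations, and moving $\sigma$ off $\sigma_*$ reintroduces the residual-curvature term $\langle R[\sigma],D^2\mathcal{M}[\sigma](\widetilde\sigma,\widetilde\sigma)\rangle$, which is small in norm but not sign-controlled. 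Your two proposed repairs --- reading ``positive-definite in a neighborhood'' pointwise, direction by direction, which is what continuity actually supplies, or strengthening hypothesis 2 to a quantitative lower bound $\|D\mathcal{M}[\sigma_*]\widetilde\sigma\|_{L^2(\mathcal{D})}\ge c\|\widetilde\sigma\|_{L^2(\Omega)}$ so that coercivity survives the perturbation --- are precisely what is needed to make that step rigorous. In short: your proof matches the paper's where the paper is rigorous, and correctly identifies the step at which the paper's own one-line continuity argument has a gap.
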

\begin{proof}

We first prove that the Hessian operator $\mathcal{H} \loss[\cdot]$ is positive-definite at $\sigma_*$. For any $\widetilde{\sigma} \in L^2(\Omega),$ the Hessian of the optimization in \eqref{eqn:opt_cont} is derived as 
\begin{equation}
\label{eqn: hess loss}
\begin{array}{ll}
\mathcal{H} \loss[\sigma_*] (\widetilde{\sigma},\widetilde{\sigma}) & \displaystyle =  2 \int_{\mathcal{D}} \left\langle \frac{\delta \mathcal{M}[\sigma_*](x)}{\delta \sigma}, \widetilde{\sigma} \right\rangle_{\Omega}^2 \dd x +2 \mathcal{H} \mathcal{M}[\sigma_*] (\widetilde{\sigma},\widetilde{\sigma}) (\mathcal{M}[\sigma_*] - \mathcal{M}[\sigma_*])\\
& \displaystyle = 2 \int_{\mathcal{D}} \left\langle \frac{\delta \mathcal{M}[\sigma_*](x)}{\delta \sigma}, \widetilde{\sigma} \right\rangle_{\Omega}^2 \dd x.     
\end{array}
\end{equation}

Suppose there exists a non-zero function $\widetilde{\sigma}_0 \in L^2(\Omega)$ that makes the Hessian value \eqref{eqn: hess loss} be 0. Then it would also be the case that 
\[
\left\langle \frac{\delta \mathcal{M}[\sigma_*](x)}{\delta \sigma}, \widetilde{\sigma}_0 \right\rangle_{\Omega}=0, ~\forall ~x \in \mathcal{D}.\]
Due to Assumption 2 that \[\left\{\frac{\delta \mathcal{M}[\sigma_*](x)}{\delta \sigma}: \Omega \to \R ~\vert~x \in \mathcal{D}\right\} = L^2(\Omega),\] it implies that $\widetilde{\sigma}_0$ must be the zero function. This contradicts the above presumption of the Hessian value being 0 for a non-zero function. Therefore, the Hessian value $\mathcal{H}\loss[\sigma_*]$ \eqref{eqn: hess loss} must be strictly above 0 for all non-trivial $\widetilde{\sigma} \in L^2(\Omega).$

Moreover, due to Assumption 1 that $\mathcal{H} \Loss[\cdot]$ is continuous around $\sigma_*,$ we conclude that the Hessian operator is strictly positive-definite in the neighborhood of $\sigma_*.$
\end{proof}

\bibliographystyle{plain}
\bibliography{Reference}

\end{document}